\documentclass[12pt,reqno]{amsart}
\usepackage{amsmath, amssymb, amsthm}
\usepackage{geometry}
\usepackage{hyperref}
\usepackage{cite}
\usepackage{slashed} 
\usepackage{tikz-cd}
\usepackage{tikz}
\usepackage{adjustbox}
\usepackage{mathtools}
\usetikzlibrary{arrows.meta,decorations.markings,calc}
\tikzset{
  edge/.style={very thick},
  orient/.style={
    postaction={decorate},
    decoration={markings, mark=at position 0.55 with {\arrow{Stealth[length=7pt]}}}
  }
}
\usepackage{leftidx} 
\usepackage{quiver}
\usepackage{enumitem}

\theoremstyle{plain}
\newtheorem{theorem}{Theorem}[section] 
\newtheorem{lemma}[theorem]{Lemma}
\newtheorem{proposition}[theorem]{Proposition}

\theoremstyle{definition}
\newtheorem{theoremlet}{Theorem}

\newtheorem{remark}[theorem]{Remark}

\title{A Non-trivial index difference on Surfaces  \\ of genus at least $3$}
\author{Samuel Lockman$^{\ast}$}
\address{Universität Regensburg, Fakultät für Mathematik, 93040 Regensburg, Germany}
\email{\href{mailto:samuel.lockman@mathematik.uni-regensburg.de}{samuel.lockman@mathematik.uni-regensburg.de}}
\thanks{{$^{\ast}$ Funded by the Deutsche Forschungsgemeinschaft (DFG, German Research Foundation) – Project numbers
224262486; 
313840899}} 

\DeclareMathOperator{\id}{Id}

\DeclareMathOperator{\inddiff}{ind-diff}
\DeclareMathOperator{\sign}{sign}

\newcommand{\KO}{\mathbf{KO}}

\newcommand{\spin}{\text{Spin}}

\newcommand{\esc}[1]{\langle{#1}\rangle}

\newcommand{\cl}{\mathbf{Cl}}

\newcommand{\Diff}[1]{\mathrm{Diff}(#1)}
\newcommand{\SDiff}[1]{\mathrm{SDiff}(#1)}

\begin{document}
\begin{abstract}
    For every closed surface of genus at least $3$, equipped with any bounding spin structure, we show that the index difference, viewed as a map from the fundamental group of the space of Dirac-invertible Riemannian metrics to $\KO^{-4}(*)$, is non-trivial. If the genus is at least $5$, we also show that the aforementioned index difference is surjective. For products of two closed surfaces of genus at least $3$, equipped with any spin structure, we prove that the corresponding space of Dirac-invertible Riemannian metrics is not contractible. We discuss the relationship of this result to the existence of metrics with harmonic spinors in dimension~$4$.
\end{abstract}

\maketitle
\markboth{\protect\normalfont\scshape Samuel Lockman}{\protect\normalfont\scshape Samuel Lockman}

\section{Introduction}
Let $M$ be a closed and connected spin manifold of dimension $n$ and let $\mathcal{R}(M)$ denote the space of Riemannian metrics on $M$, equipped with the $C^{\infty}$ topology. Index theory has been one of the main tools for studying the space $\mathcal{R}_{\mathrm{psc}}(M) \subset \mathcal{R}(M)$ of positive scalar curvature metrics. Building on the surgery results by Gromov-Lawson \cite{GL-classification} and Schoen-Yau \cite{Schoen-yau} (see also \cite{Ebert-Frenck}), Stolz \cite{Stolz-GL-conj} proved that any closed and simply connected spin manifold of dimension at least $5$ admits a metric of positive scalar curvature if and only if the $\KO^{-n}(*)$-valued index of the Dirac operator vanishes. To study the homotopy type of the space $\mathcal{R}_{\mathrm{psc}}(M)$, Hitchin \cite[Section~4.4]{Hitchin} constructed a map 
\begin{align}\label{ind-diff_psc}
    \inddiff : \pi_{\ell}(\mathcal{R}_{\mathrm{psc}}(M), g_0) \to \KO^{-(n + \ell + 1)}(*),
\end{align} 
which was later called the index difference (see also \cite{Ebert2016}). Using the Atiyah-Singer family index theorem \cite{Index-of-elliptic-ops-IV}, Hitchin showed that this map is non-trivial if $n \equiv 0,1 \mod{8}$ and $\ell = 0$  or $n \equiv 0,7 \mod{8}$ and $\ell = 1$. This map has been studied further by many \cite{Botvinnik-Ebert-Randal-Williams, Crowley-Schick-Steimle, Crowley-Schick, Hanke-Schick-Steimle, GL, Kreck-Stolz, Ebert2016}. For example, Botvinnik, Ebert, and Randal-Williams \cite{Botvinnik-Ebert-Randal-Williams} showed that if $n \geq 6$, then the map \eqref{ind-diff_psc} is non-trivial for all $\ell \geq 0$, whenever the codomain is non-trivial. 

The manifolds considered in this paper do not admit metrics of positive scalar curvature. Instead, we are interested in the larger space of metrics for which the Dirac operator is invertible, denoted by $\mathcal{R}_{\mathrm{inv}}(M)$. This is precisely the space needed to construct the index difference. Specifically, the map \eqref{ind-diff_psc} factors through the map
\begin{align}\label{ind-diff}
    \inddiff : \pi_{\ell}(\mathcal{R}_{\mathrm{inv}}(M), g_0) \to \KO^{-(n + \ell + 1)}(*),
\end{align}
which only needs $\mathcal{R}_{\mathrm{inv}}(M) \neq \emptyset$ to be defined. In addition to being a tool to study the homotopy type of the space $\mathcal{R}_{\mathrm{inv}}(M)$, it can also be used to prove the existence of Riemannian metrics with a harmonic spinor. Indeed, if one can show that this map is non-trivial for some $\ell$, then the space $\mathcal{R}_{\mathrm{inv}}(M)$ is not equal to the contractible space $\mathcal{R}(M)$ of all Riemannian metrics and hence there is a metric on $M$ with a harmonic spinor. Using this technique, Hitchin \cite[Theorem~4.5]{Hitchin} proved that if $n \equiv 0,1,7 \mod{8}$, then $M$ admits a metric with a harmonic spinor. Bär \cite[Theorem A]{bar-harm} proved the analogous statement for $n \equiv 3 \mod{4}$. Although not explicitly stated, Bär shows that the index difference for $\ell = 0$ is non-trivial in these dimensions, see also \cite{Dahl, Waterstraat-metrics}. Inspired by these results, Bär and Dahl \cite[Conjecture A]{bar-dahl} conjectured that any closed spin manifold of dimension at least $3$ admits a metric with a harmonic spinor. Seeger \cite{Seeger-thesis} proved the conjecture for spheres of dimension $n \equiv 0 \mod{4}$. Dahl \cite[Theorem 4.1 \& Corollary 4.2]{Dahl} proved the conjecture for certain simply connected manifolds of dimension at least $5$ with a cyclic group action; in particular, Dahl proved the conjecture for any sphere of dimension at least~$5$. Crowley, Schick, and Steimle \cite[Theorem 1.4]{Crowley-Schick-Steimle} proved the conjecture in all cases with $n \geq 6$. Specifically, they show the following.
\begin{theorem}[Crowley, Schick, Steimle, 2018]
    If $\dim M = n \geq 6$, then for all $\ell \geq 0$ with $\ell + n \equiv 0,1 \mod{8}$, and for all $g_0 \in \mathcal{R}_{\mathrm{inv}}(M)$ the map
    \begin{align*}
        \mathrm{\inddiff} :\pi_{\ell}(\mathcal{R}_{\mathrm{inv}}(M), g_0) \to \KO^{-(n+\ell+1)}(*)
    \end{align*}
    is a split surjection.
\end{theorem}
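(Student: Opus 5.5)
The plan is to compare $\mathcal{R}_{\mathrm{inv}}(M)$ with $\mathcal{R}_{\mathrm{psc}}$ in a setting where the psc index difference is already known to be a split surjection, and then transport the splitting back to $M$ via the factorization of \eqref{ind-diff_psc} through \eqref{ind-diff}.

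First I would reduce to a single basepoint. Any two metrics in $\mathcal{R}_{\mathrm{inv}}(M)$ lie in the contractible space $\mathcal{R}(M)$, so the index difference is compatible with changing basepoint. Concretely, it is additive under concatenation of paths in $\mathcal{R}(M)$, which is a version of Bott periodicity or the spectral-flow additivity. Hence surjectivity and splitting at one $g_0$ imply them at every $g_0$ after composing with the basepoint-change isomorphism. It therefore suffices to produce, for one convenient $g_0$, a homomorphism $s:\KO^{-(n+\ell+1)}(*)\to \pi_\ell(\mathcal{R}_{\mathrm{inv}}(M),g_0)$ with $\inddiff\circ s=\mathrm{id}$.

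Second, I would construct $s$ by a local construction. Choose an embedded disc $D^n\subset M$. Using Gromov--Lawson/Chernysh surgery techniques, take a metric $g_0$ which on a collar around a small sphere looks like a long torpedo/cylinder. Then take the spheres $S^n$ for $n\ge 6$, where the Botvinnik--Ebert--Randal-Williams theorem gives that $\inddiff:\pi_\ell(\mathcal{R}_{\mathrm{psc}}(S^n),g_{\mathrm{round}})\to\KO^{-(n+\ell+1)}(*)$ is surjective. In degrees $\ell+n\equiv 0,1 \bmod 8$ the target is $\mathbb{Z}$ or $\mathbb{Z}/2$, so surjectivity onto a cyclic group gives a splitting, provided the preimage element has the right order. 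For $\mathbb{Z}/2$ I would argue via the real structure, or else use the explicit Hitchin-type family of order two. Given a family in $\mathcal{R}_{\mathrm{psc}}(S^n)$ fixed near a hemisphere, I would glue it into $M$ via connected sum $M\# S^n\cong M$, keeping the metric on $M\setminus D$ fixed.

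The main obstacle is showing that the glued family stays Dirac-invertible and that the index difference is additive under this gluing. Here I would use a long-neck/relative index argument: stretching the neck makes the Dirac operator on $M\# S^n$ invertible whenever it is invertible on both pieces with cylindrical ends. This holds on the $M$ side by the choice of $g_0$, which requires arranging that $g_0$ has an invertible Dirac operator with a psc cylindrical end; that in turn needs the Crowley--Schick--Steimle style codimension-$\ge 3$ surgery invariance of $\mathcal{R}_{\mathrm{inv}}$. On the $S^n$ side it holds by psc. The gluing formula for family indices, which is the additivity of the index difference, then gives $\inddiff(s(x))=\inddiff_{S^n}(x)=x$.
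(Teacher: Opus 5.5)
The paper does not prove this statement; it quotes it from Crowley--Schick--Steimle, and their route differs from yours. They embed a disc $D^n\subset M$ and use the orbit map $\mathrm{Diff}_\partial(D^n)\to\mathcal{R}_{\mathrm{inv}}(M)$, $\phi\mapsto\phi^*g_0$. This lands in $\mathcal{R}_{\mathrm{inv}}(M)$ for \emph{every} $g_0$, because spin diffeomorphisms conjugate Dirac operators. The family index theorem identifies the composite $\pi_\ell\mathrm{Diff}_\partial(D^n)\to\KO^{-(n+\ell+1)}(*)$ with the $\alpha$-invariant of the exotic sphere obtained via the Gromoll map. The hard input (Gromoll filtration and Toda brackets) is that elements with non-zero $\alpha$-invariant in dimensions $n+\ell+1\equiv 1,2\bmod 8$ already come from $\pi_\ell\mathrm{Diff}_\partial(D^n)$ for $n\geq 6$. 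Your strategy is genuinely different: take the Botvinnik--Ebert--Randal-Williams families on $S^n$, make them standard near a disc by Chernysh, and glue them into $M$ through a long neck. It trades homotopy theory of $\mathrm{Diff}_\partial(D^n)$ for analysis and is viable in outline, but as written several steps fail.

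First, the reduction to a single basepoint is wrong. $\mathcal{R}_{\mathrm{inv}}(M)$ is in general not path-connected; already the case $\ell=0$ of the statement, or B\"ar's result for $n\equiv 3\bmod 4$, shows this. A path in $\mathcal{R}(M)$ that leaves $\mathcal{R}_{\mathrm{inv}}(M)$ induces no map $\pi_\ell(\mathcal{R}_{\mathrm{inv}}(M),g_0)\to\pi_\ell(\mathcal{R}_{\mathrm{inv}}(M),g_1)$; additivity of index classes does not transport homotopy classes. You must instead show that every path component of $\mathcal{R}_{\mathrm{inv}}(M)$ contains a metric that is a long round cylinder with a cap near some point.

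Second, for this you invoke a ``codimension $\geq 3$ surgery invariance of $\mathcal{R}_{\mathrm{inv}}$'' attributed to Crowley--Schick--Steimle. No such theorem exists: the spinorial Chernysh/Walsh statement is exactly what Remark~\ref{homotopy-equivalence-remark} describes as believed but unproven. You do not need it. A $C^1$-small (hence invertibility-preserving) deformation first makes $g_0$ flat near $p$. Then a conformal change supported in a small ball yields the cylinder and cap, since rotationally symmetric metrics are conformally flat. This conformal change runs through the convex conformal class, which stays inside $\mathcal{R}_{\mathrm{inv}}(M)$ because $\dim\ker\slashed{D}^g$ is a conformal invariant. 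The same conformal picture, together with removable singularities for bounded harmonic spinors, shows that the $L^2$-Dirac operator on $M\setminus B$ with an infinite cylindrical end is invertible. That invertibility is what your neck-stretching argument actually uses.

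Third, for $n+\ell+1\equiv 1,2\bmod 8$ the group $\KO^{-(n+\ell+1)}(*)$ is $\mathbb{Z}/2$ in both cases, never $\mathbb{Z}$. So ``split'' (for $\ell\geq 1$) requires a preimage of order at most two. Surjectivity does not provide one, and the appeal to a ``real structure'' is not an argument. One option, if your input factors through a map of spectra, is an $\eta$-multiple of a lift of a Bott class.

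Finally, the additivity of $\inddiff$ under gluing is only asserted. It needs an actual relative index theorem for families, e.g.\ Bunke's, applied in the relative setting $(D^{\ell+1},S^\ell)$.
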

We see that the conjecture by Bär and Dahl is still open for $n \in \{4,5\}$. The case $n=2$ is not mentioned in this conjecture because of the following. Hitchin \cite[Proposition 2.3]{Hitchin} proved that for any closed surface $\Sigma_{\gamma}$, equipped with any spin structure, we have for any metric $g \in \mathcal{R}(\Sigma_{\gamma})$ that
\begin{align}
    \dim_{\mathbb{H}} \ker{\slashed{D}^g} \leq \Big\lfloor \frac{\gamma+1}{2}\Big\rfloor,
\end{align}
where $\slashed{D}^g$ denotes the Dirac operator acting on sections of the spinor bundle associated to the irreducible representation of $C\ell_2 \cong \mathbb{H}$. This means that if $\gamma \leq 2$ and $\Sigma_{\gamma}$ is equipped with any bounding spin structure, then there does not exist a metric with a harmonic spinor, i.e. $\mathcal{R}_{\mathrm{inv}}(\Sigma_{\gamma}) = \mathcal{R}(\Sigma_{\gamma})$. For any closed surface $\Sigma_{\gamma}$ of genus $\gamma \geq 2$ equipped with any spin structure~$\mathfrak{s}$, Bär and Schmutz \cite{bar-schmutz} showed that for any integer $k$ with $0 \leq 2k \leq \gamma +1$ and with $k$ even if the spin structure $\mathfrak{s}$ is bounding and $k$ odd if the spin structure $\mathfrak{s}$ is non-bounding, then there is a Riemannian metric $g$ such that $\dim_{\mathbb{H}} \ker{\slashed{D}^g} = k$. Hence any closed spin surface $\Sigma_{\gamma}$ of genus $\gamma \geq 3$ admits a metric with a harmonic spinor. An alternative proof of the existence of a metric with a harmonic spinor was given by Ammann, Wei{\ss}, and Witt \cite{Ammann-Weiss-Witt}.

The index difference is also related to the following. Let $\mathcal{R}_{\mathrm{min}}(M) \subset \mathcal{R}(M)$ be the set of Riemannian metrics for which the kernel of the corresponding Dirac operator attains the lower bound given by the $\KO^{-n}(*)$-valued index. Note that if this index vanishes, we have that  $\mathcal{R}_{\mathrm{inv}}(M) = \mathcal{R}_{\mathrm{min}}(M)$. Maier \cite{Maier} showed that $\mathcal{R}_{\mathrm{min}}(M)$ is dense in $\mathcal{R}(M)$ if $n \leq 4$. Ammann, Dahl, and Humbert \cite{Ammann-Dahl-Humbert} showed the corresponding density result in all dimensions. Further topological information about the space $\mathcal{R}_{\mathrm{min}}(M)$ was obtained by Ammann and Dahl \cite{Ammann-Dahl}, where they showed the following theorem. 
\begin{theorem}[Ammann, Dahl, 2025]\label{Ammann-Dahl}
    If $\dim M \in \{2,4\}$, then $\mathcal{R}_{\mathrm{min}}(M)$ is connected.
\end{theorem}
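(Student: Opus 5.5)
The plan is a codimension argument. The ambient space $\mathcal{R}(M)$ is an open convex cone in a Fréchet space, so it is contractible and in particular path connected. I would show that the complement $\mathcal{R}(M)\setminus\mathcal{R}_{\mathrm{min}}(M)$ is a countable union of pieces of real codimension at least $2$. A generic path between two metrics of $\mathcal{R}_{\mathrm{min}}(M)$ can then be pushed off these pieces. The dimension restriction $\dim M\in\{2,4\}$ should enter exactly through the quaternionic structure on spinors. In these dimensions the relevant Dirac operator (the full $\slashed{D}^g$ on surfaces via $C\ell_2\cong\mathbb{H}$, and $\slashed{D}^{g,+}\colon \Sigma^+\to\Sigma^-$ in dimension $4$) commutes with a quaternionic structure. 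Hence the excess kernel $\ker\slashed{D}^g$ minus the index lower bound has dimension a multiple of $4$ over $\mathbb{R}$.

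\textbf{Step 1 (finite-dimensional reduction).} Fix $g$ with excess kernel of quaternionic dimension $k\ge 1$. Using a spectral gap, reduce locally near $g$ to a finite-dimensional model. For metrics $h$ near $g$ (after identifying spinor bundles via Bourguignon--Gauduchon), the operator is modelled by an $\mathbb{H}$-linear operator $A(h)$ on a fixed $k$-dimensional quaternionic space $V\cong\ker\slashed{D}^g$. In dimension $2$ it is self-adjoint; in dimension $4$ it is a map $V^+\to V^-$ between spaces of equal quaternionic dimension. Moreover $h$ lies in $\mathcal{R}_{\mathrm{min}}$ exactly when $A(h)$ is invertible. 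The non-invertible locus in the target model space has real codimension $3$ in the self-adjoint quaternionic case (rank drop by one quaternionic dimension) and codimension $4$ in the quaternionic-linear, index-zero case. In both cases this is $\ge 2$; in the real or complex settings occurring in other dimensions it would be only $1$ or $2$ with an orientation issue.

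\textbf{Step 2 (transversality).} Show that the differential of $h\mapsto A(h)$ at $g$ is surjective onto the model space. By the Bourguignon--Gauduchon variation formula, the first variation in direction $k$ is the quaternionic Hermitian form
\[
(\phi,\psi)\mapsto \int_M \langle k, T_{\phi,\psi}\rangle\,dv_g ,
\]
where $T_{\phi,\psi}$ is the symmetrized energy--momentum tensor of the harmonic spinors. Surjectivity amounts to showing that the tensors $T_{\phi_i,\phi_j}$ built from a quaternionic basis are linearly independent, as a family of real-valued forms, over the appropriate number of real coordinates. I expect this step to be the main obstacle. My first attempt would use unique continuation and a local computation near a point where the harmonic spinors are pointwise linearly independent, in the spirit of Maier and Ammann--Dahl--Humbert.

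\textbf{Step 3 (passing to $k=1$) and conclusion.} If full surjectivity for general $k$ is too hard, I would first use the density results of Maier and Ammann--Dahl--Humbert to reduce to $k=1$. Arguing by induction on $k$, a nearby metric with strictly smaller excess kernel exists. Then one needs only that the $k\ge2$ strata have even larger codimension and the $k=1$ stratum has codimension $3$ or $4$. Given the local submanifold structure of codimension $\ge 2$, I would take two metrics $g_0,g_1\in\mathcal{R}_{\mathrm{min}}(M)$ and join them by the straight path. This path lies in a finite-dimensional affine family; enlarge the family by finitely many directions realizing the surjectivity of Step 2 along the compact path. A parametric transversality theorem (Sard) then produces a perturbed path avoiding all strata of codimension $\ge 2$ with fixed endpoints. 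Hence $\mathcal{R}_{\mathrm{min}}(M)$ is path connected.
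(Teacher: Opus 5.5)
The paper does not prove this statement. It is quoted from Ammann--Dahl and used only as a black box, so your proposal has to be judged on its own. The codimension strategy is sensible, and your count in dimension $4$ (codimension $4$ for index zero) is fine. In dimension $2$, however, the argument rests on the wrong local model.

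\textbf{The dimension-$2$ model.} The $\mathbb{H}$-linear operator $\slashed{D}^g$ on $\slashed{\mathfrak{S}}^g(\Sigma_\gamma)$ is \emph{odd} for the $\mathbb{Z}/2$-grading. Its index in $\KO^{-2}(*)\cong\mathbb{Z}/2$ is $\dim_{\mathbb{H}}\ker\slashed{D}^g \bmod 2$, so the kernel can never jump by one quaternionic dimension.

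Pass to the even part $V^0\cong\mathbb{C}^k$ of the small-eigenvalue space. There, odd self-adjoint right-$C\ell_2$-linear operators correspond to complex \emph{skew-symmetric} $k\times k$ matrices. Equivalently, this is $\bar\partial$ on $K^{1/2}$, with $H^1(K^{1/2})\cong H^0(K^{1/2})^*$ by Serre duality.

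For a bounding spin structure, the first degeneracy locus is therefore $\{\operatorname{Pf}=0\}$, a complex hypersurface. Its real codimension is exactly $2$, not $3$. For a non-bounding structure the jump is $1\to 3$, giving codimension $6$.

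The model you actually wrote down is self-adjoint $\mathbb{H}$-linear maps of $\mathbb{H}^k$. There the non-invertible set is the real hypersurface $\{\det=0\}$, of codimension $1$. That is the dimension-$3$ situation, where $\mathcal{R}_{\mathrm{inv}}$ is disconnected (B\"ar). So without the grading your argument collapses.

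The paper itself gives a sanity check. Suppose the bad set in dimension $2$ had codimension $\geq 3$ in your sense. Then the same transversality machinery would push every disc bounding a loop in $\mathcal{R}_{\mathrm{inv}}(\Sigma_\gamma)$ off the bad set. That would force $\inddiff=0$ on $\pi_1$, contradicting Theorem~\ref{main theorem}. Codimension $2$ is sharp: the space is connected but not simply connected.

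\textbf{Step 2 is false in dimension $2$.} At a non-minimal $g$, the real components of $dA_g(\dot g)$ are $L^2$-pairings of $\dot g$ with energy--momentum tensors of harmonic spinors. These tensors are trace-free and divergence-free, i.e.\ real parts of holomorphic quadratic differentials. Hence $\operatorname{rank} dA_g\leq 6\gamma-6$, while the model space has real dimension $k(k-1)$. Surjectivity therefore fails once $k(k-1)>6\gamma-6$. This happens in large genus, since metrics with $k$ of order $\gamma/2$ exist (Hitchin, B\"ar--Schmutz).

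\textbf{Step 3 does not repair this.} Density of $\mathcal{R}_{\mathrm{min}}(M)$ says nothing about the codimension of the higher strata, and a generic path must avoid all of them.

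What you need is weaker than surjectivity: at every non-minimal $g$, $dA_g$ should have real rank $\geq 2$. Then the stratum through $g$, which is locally $\{h : A(h)=0\}$, lies in a codimension-$2$ submanifold.

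In dimension $2$ this can be checked directly. The image of $dA_g$ is a complex subspace, since you may replace the Beltrami differential $\mu$ by $i\mu$. It is non-zero, because the Wronskian $\phi\,\partial\psi-\psi\,\partial\phi\in H^0(K^2)$ of two independent holomorphic half-forms does not vanish.

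In dimension $4$ you still have to supply an analogous non-degeneracy statement. Also, when $\widehat{A}(M)\neq 0$, you must replace ``$A(h)$ invertible'' by ``$A(h)\colon V^+\to V^-$ of maximal rank''.
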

The main goal of this article is to show that this is sharp in dimension $2$, in the following sense.
\begin{theoremlet}\label{main theorem}
    Let $\Sigma_{\gamma}$ be a closed surface of genus $\gamma \geq 3$, equipped with any bounding spin structure. Then, for every $g_0 \in \mathcal{R}_{\mathrm{inv}}(\Sigma_{\gamma})$, the index difference
    \begin{align}
         \inddiff : \pi_1(\mathcal{R}_{\mathrm{inv}}(\Sigma_{\gamma}), g_0) \to \KO^{-4}(*),
    \end{align}
    is a non-trivial homomorphism. Furthermore, if $\gamma \geq 5$, then the above index difference is surjective.
\end{theoremlet}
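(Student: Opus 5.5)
The plan is to reduce the index difference on $\pi_1$ to a local winding-number computation around a single metric with a harmonic spinor, and to manufacture such metrics using Bär--Schmutz.

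\emph{Step 1: a concrete description of the index difference.} Take a loop $(g_t)_{t\in S^1}$ in $\mathcal{R}_{\mathrm{inv}}(\Sigma_\gamma)$ based at $g_0$. Since $\mathcal{R}(\Sigma_\gamma)$ is contractible, the loop extends to a disc family $(g_z)_{z\in D^2}$. The Dirac family over $(D^2,S^1)$ is invertible on the boundary, so it defines a relative class in $\KO^{-2}(D^2,S^1)\cong \KO^{-4}(*)\cong\mathbb{Z}$, and this class is $\inddiff$ of the loop.

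On a surface the quaternionic structure makes $\slashed{D}^{g,+}\colon S^+\to S^-$ equivalent to a complex-skew operator. For a skew operator:
\begin{itemize}[label={}]
\item its kernel has even complex dimension (for bounding spin structures the mod-$2$ index vanishes);
\item the locus where the kernel is nonzero has real codimension $2$, being cut out locally by a Pfaffian.
\end{itemize}
After a generic perturbation of the disc rel boundary, the disc therefore meets the set of non-invertible metrics in finitely many points $z_1,\dots,z_m$. At each $z_i$ the kernel is minimal, namely $\dim_{\mathbb{H}}=2$ in Hitchin's normalization. The relative class is then a sum of local contributions. Each contribution is (a fixed multiple of) the local winding number of the Pfaffian of the compression of $\slashed{D}$ to the kernel. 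Identifying this local contribution with a definite element $c\in\KO^{-4}(*)$, up to sign, is part of what I would verify using the Atiyah--Singer family index theorem.

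\emph{Step 2: producing a loop with non-zero index difference.} Since $\gamma\geq 3$ and the spin structure is bounding, Bär--Schmutz with $k=2$ gives a metric $g_1$ with $\dim_{\mathbb{H}}\ker\slashed{D}^{g_1}=2$. I would then consider a two-parameter family $g_1+s h_1+t h_2$ of metric perturbations. By the first-variation formula for the Dirac operator under a change of metric (Bourguignon--Gauduchon), the derivative of the Pfaffian of the compressed operator is the real-linear map
\[
h\longmapsto \mathrm{Pf}\bigl(\langle \tfrac{d}{d\varepsilon}\slashed{D}^{g_1+\varepsilon h}\varphi_i,\varphi_j\rangle\bigr)\in\mathbb{C},
\]
where $(\varphi_i)$ is a basis of the kernel. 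This map is expressed through the energy-momentum tensors $h\mapsto\int\langle h,\,T_{\varphi_i,\varphi_j}\rangle$.

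The main obstacle is showing that this map has real rank $2$, i.e. that the harmonic spinors at $g_1$ can be moved off transversally. I would try to prove that the relevant combinations of energy-momentum tensors are $\mathbb{R}$-linearly independent, using unique continuation for harmonic spinors. If some component of these combinations vanished on an open set, unique continuation would force a contradiction with $\varphi\neq0$. It may also be necessary to first perturb $g_1$ within the kernel-dimension-$2$ stratum to make it generic.

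Once transversality holds, a small circle around $g_1$ in the plane spanned by $h_1,h_2$ lies in $\mathcal{R}_{\mathrm{inv}}(\Sigma_\gamma)$. I would connect it to $g_0$ by a path inside $\mathcal{R}_{\mathrm{inv}}(\Sigma_\gamma)$; such a path exists by Theorem~\ref{Ammann-Dahl} together with $\mathcal{R}_{\mathrm{inv}}=\mathcal{R}_{\mathrm{min}}$. The resulting loop has index difference $\pm c\neq 0$, which proves that $\inddiff$ is non-trivial.

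\emph{Step 3: surjectivity for $\gamma\geq 5$.} Since $\inddiff$ is a homomorphism, it is surjective as soon as some loop realizes a generator of $\mathbb{Z}$. If the local contribution $c$ is $\pm1$, surjectivity already follows from Step 2. I expect, however, that $c$ is twice a generator, reflecting the quaternionic/real mismatch in $\KO^{-4}$; otherwise the genus restriction would be pointless. In that case I would use the extra room available for $\gamma\geq5$, where Bär--Schmutz allow $k=4$, i.e. a metric $g_2$ with $\dim_{\mathbb{H}}\ker=4$. I would compute the local index contribution of a small two-sphere of directions around $g_2$ in a non-generic stratum, or a suitable degenerate two-parameter family around it, whose local contribution need not be a multiple of $c$. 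Alternatively, I would glue a genus-$3$ piece with a prescribed local degeneration into a larger surface to realize an odd multiple of the generator. Combined with the class $c$ from Step 2, this would give a generator.
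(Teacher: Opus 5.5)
\textbf{Your local route is genuinely different from the paper's, but as written it leaves open the step that carries the content of the theorem: you never show that $c\neq 0$.}

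Appealing to the Atiyah--Singer family index theorem does not help locally. Over $(D^2,S^1)$ with the trivial bundle it only computes the absolute class in $\KO^{-2}(D^2)=\KO^{-2}(*)$, and the map $\KO^{-2}(D^2,S^1)\to\KO^{-2}(D^2)$ is zero, so it is blind to the relative class. This is why the paper works over a \emph{closed} base. It takes a spin $\Sigma_\gamma$-bundle $Z\to\Sigma_h$ with $\sigma(Z)\neq0$, and uses connectivity of $\mathcal{R}_{\mathrm{inv}}(\Sigma_\gamma)$ to make the fibrewise metrics invertible over the $1$-skeleton. It then reads off the relative class of the $2$-cell from $p_!(1)$ and $*^Z_!(1)\mapsto\hat{A}(Z)/2=-\sigma(Z)/16$; no metric with a harmonic spinor is ever located.

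To make your route work you need a direct computation. After a spectral cut-off, the family over a small disc reduces to $2\times 2$ complex skew matrices, i.e.\ a loop in $U(2)/Sp(1)\cong U(1)$ detected by the Pfaffian. The map $\pi_1(U(2)/Sp(1))\to\pi_1(U/Sp)\cong\KO^{-4}(*)$ is an isomorphism by Bott periodicity. Equivalently, after complexification $\det=\mathrm{Pf}^2$ winds twice, which is the image of a generator under $\KO^{-4}(*)\to K^{-4}(*)$.

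Your ``main obstacle'', transversality, is settled holomorphically rather than by unique continuation. Identify $\ker\slashed{D}^{g_1}$ with $H^0(K^{1/2})=\mathrm{span}(s_1,s_2)$. In a Beltrami direction $\mu$, the first variation of the compressed skew form is a non-zero multiple of $\int_\Sigma\mu\,(s_1\partial s_2-s_2\partial s_1)$. The Wronskian is a non-zero holomorphic quadratic differential because $s_2/s_1$ is non-constant, so this $\mathbb{C}$-linear functional has real rank $2$.

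Consequently a transverse crossing contributes a \emph{generator}, and your expectation that $c$ is twice a generator is wrong. Together with your own Step 1, it would even contradict the paper's surjectivity for $\gamma\ge5$.

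Step 3 also rests on a false count. B\"ar--Schmutz require $2k\le\gamma+1$, so $\dim_{\mathbb H}\ker=4$ needs $\gamma\ge7$. For $\gamma\in\{5,6\}$, Hitchin's bound $\lfloor(\gamma+1)/2\rfloor=3$ excludes it, and the gluing alternative is not an argument. The hypothesis $\gamma\ge5$ in the paper is not evidence of an obstruction. It reflects the stability range (Randal-Williams, Sierra) needed to produce a spin bundle with $\sigma(Z)=16$.

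With $c$ computed correctly, Step 3 is unnecessary. Your local argument, once completed as above, would give surjectivity already for every $\gamma\ge3$. The paper's global argument instead gets non-triviality cheaply from any spin bundle with $\sigma(Z)\neq0$, but needs signature exactly $16$ for surjectivity.
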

\begin{remark}
    Note that Theorem \ref{main theorem} gives a topological proof of the aforementioned result that any spin surface of genus at least $3$ admits a metric with a harmonic spinor. Also note that $\gamma \geq 3$ in this theorem is sharp. Indeed, as mentioned above, we have for any $\beta \leq 2$ that $\mathcal{R}_{\mathrm{inv}}(\Sigma_{\beta}) = \mathcal{R}(\Sigma_{\beta})$, when $\Sigma_{\beta}$ is equipped with any bounding spin structure.
\end{remark}
We also conclude the following result.
\begin{theoremlet}\label{second main theorem}
    Let $\Sigma_{\gamma^1}\times \Sigma_{\gamma^2}$ be a product of closed surfaces of genera ${\gamma}^1, {\gamma}^2 \geq 3$, equipped with any spin structure. Then, at least one of the groups 
    \begin{align*}
        &\pi_1(\mathcal{R}_{\mathrm{inv}}(\Sigma_{\gamma^1}\times \Sigma_{\gamma^2})), \\
        &\pi_2(\mathcal{R}_{\mathrm{inv}}(\Sigma_{\gamma^1}\times \Sigma_{\gamma^2})), \\
        &\pi_3(\mathcal{R}_{\mathrm{inv}}(\Sigma_{\gamma^1}\times \Sigma_{\gamma^2}))
    \end{align*}
    is non-trivial.
\end{theoremlet}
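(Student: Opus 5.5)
The plan is to feed Theorem \ref{main theorem} into a product (multiplicativity) formula for the index difference. The target is a non-trivial element of $\KO^{-(4+\ell+1)}(*)$ for some $\ell\in\{1,2,3\}$.

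First I fix a spin structure on $M=\Sigma_{\gamma^1}\times\Sigma_{\gamma^2}$. Since $H^1(M;\mathbb{Z}/2)=H^1(\Sigma_{\gamma^1};\mathbb{Z}/2)\oplus H^1(\Sigma_{\gamma^2};\mathbb{Z}/2)$, every spin structure on $M$ is a product $\mathfrak{s}_1\times\mathfrak{s}_2$ of spin structures on the factors. The argument then splits into two cases.

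\emph{Case 1: at least one factor is non-bounding.} Then $\mathcal{R}_{\mathrm{inv}}(M)$ may be empty, and the conclusion has to be read as saying the homotopy groups are non-trivial or undefined. I would expect the paper to handle this case separately. Note that $\mathcal{R}_{\mathrm{inv}}(M)\neq\mathcal{R}(M)$ whenever the latter is non-empty, since $\dim M=4$ gives $\hat{A}=0$ here and the Dirac operator of a product metric has kernel of the form $\ker\otimes\ker$. For a product of two non-bounding structures, $\alpha=\alpha_1\alpha_2=\eta^2\neq 0$ in $\KO^{-4}$? No: $\KO^{-4}(*)=\mathbb{Z}$ and $\eta^2$ lives in $\KO^{-2}$. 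Since $\alpha(M)\in\KO^{-4}=\mathbb{Z}$ is detected by $\hat{A}$, which vanishes for a product of surfaces, every metric with a harmonic spinor lies outside $\mathcal{R}_{\mathrm{inv}}$. So in every case the index vanishes.

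\emph{Case 2 (main case): both factors bounding.} The steps, in order, are:
\begin{enumerate}
\item Choose $g_2\in\mathcal{R}_{\mathrm{inv}}(\Sigma_{\gamma^2})$. This is possible by Maier's density result \cite{Maier}, since $\mathcal{R}_{\mathrm{min}}=\mathcal{R}_{\mathrm{inv}}$ when the index vanishes.
\item By Theorem \ref{main theorem}, pick a loop $g_1(t)$ in $\mathcal{R}_{\mathrm{inv}}(\Sigma_{\gamma^1})$ with $\inddiff\neq 0$ in $\KO^{-4}(*)=\mathbb{Z}$.
\item Form the loop of product metrics $g_1(t)\oplus g_2$. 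Its Dirac operator is, up to the standard graded tensor identification, $\slashed{D}^{g_1(t)}\hat\otimes 1+1\hat\otimes\slashed{D}^{g_2}$. Its square is the sum of the squares, so the loop stays in $\mathcal{R}_{\mathrm{inv}}(M)$.
\item Apply the product formula $\inddiff(g_1(t)\oplus g_2)=\inddiff(g_1(t))\cdot\beta$, where $\beta$ is the class in $\KO^{-2}(*)$ associated to $\Sigma_{\gamma^2}$ with its invertible metric. Because $g_2$ is invertible, the index of $\Sigma_{\gamma^2}$ itself is zero, so this $\beta$ vanishes.
\end{enumerate}
So the naive product kills the class. This is the main obstacle, and it points to the right refinement: replace the constant $g_2$ by a loop or higher family on the second factor as well. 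Concretely, take a loop $g_2(s)$ with $\inddiff\neq 0$ from Theorem \ref{main theorem}. The torus family $g_1(t)\oplus g_2(s)$ has an index difference equal to the external product of the two classes. I expect this to be constructed via the Atiyah–Singer family index for the pair of families over $S^1\times S^1$, relative to the basepoint metrics. Multiplicativity gives the product of generators $\KO^{-4}\otimes\KO^{-4}\to\KO^{-8}\cong\mathbb{Z}$, which is non-zero because the product of two classes $2a$, $2b$ (or generators) in $\KO^{-4}$ is $4ab$ times the Bott generator.

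It remains to turn the torus family into a homotopy class. A map $T^2\to\mathcal{R}_{\mathrm{inv}}(M)$ with a non-trivial $\KO$-family class forces non-triviality of $\pi_1$ or $\pi_2$. The reason is that if $\pi_1$ and $\pi_2$ both vanish, the map $T^2\to\mathcal{R}_{\mathrm{inv}}$ is null-homotopic; the cell structure of $T^2$ has cells only in dimensions $\le 2$. A null-homotopic map gives a trivial index difference class. Allowing the reduced class to sit in the $S^1\vee S^1$ or top-cell parts, which are detected by $\pi_1$ and $\pi_2$ respectively, accounts for the list in the statement. The appearance of $\pi_3$ suggests the author instead uses a suspension or join yielding $S^3$, e.g.\ via the composition $S^3\to S^1\times S^1\times\cdots$. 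The key technical step I would verify most carefully is the multiplicativity of the index difference under products of families of invertible metrics.
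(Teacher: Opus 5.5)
There are two genuine gaps.

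\textbf{Gap 1: the non-bounding cases.} The statement covers every spin structure on $M=\Sigma_{\gamma^1}\times\Sigma_{\gamma^2}$, but your construction can only work when both factor spin structures are bounding. For a non-bounding $\mathfrak{s}$, the $\KO^{-2}(*)$-valued index of $\Sigma_\gamma$ is non-zero, since the quaternionic dimension of the kernel is always odd. Hence $\mathcal{R}_{\mathrm{inv}}(\Sigma_\gamma)=\emptyset$, and Theorem~\ref{main theorem} gives you no loop. With one bounding and one non-bounding factor, a loop times a fixed metric only produces a class in $\KO^{-6}(*)=0$.

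Your Case~1 contains no argument, and its premise is false. Since $\widehat{A}(M)=0$, we have $\mathcal{R}_{\mathrm{inv}}(M)=\mathcal{R}_{\mathrm{min}}(M)$. By Maier this set is dense, in particular non-empty, and by Theorem~\ref{Ammann-Dahl} it is connected, for every spin structure.

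The missing idea is to avoid invertible metrics on the factors altogether and work with $M$-bundles over a $4$-dimensional base. Lemma~\ref{spin-kodiara-lemma} holds for an arbitrary spin structure on the fibre, bounding or not. It gives spin bundles $Z_i\to\Sigma_{h^i}$ with $\sigma(Z_i)\neq 0$. Their product $p\colon Z=Z_1\times Z_2\to B=\Sigma_{h^1}\times\Sigma_{h^2}$ is a spin $M$-bundle, and $*^B_!\,p_!(1)=*^Z_!(1)$ corresponds to $\widehat{A}(Z_1)\widehat{A}(Z_2)=\sigma(Z_1)\sigma(Z_2)/64\neq 0$. Hence $p_!(1)\neq 0$. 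If $\pi_1,\pi_2,\pi_3$ of the connected space $\mathcal{R}_{\mathrm{inv}}(M)$ all vanished, obstruction theory over the $4$-dimensional complex $B$ would give a section of $\mathcal{R}^{\mathrm{fib}}_{\mathrm{inv}}(T^vZ)$. The family index theorem would then force $p_!(1)=0$, a contradiction. This is the paper's argument, and it explains why exactly $\pi_1,\pi_2,\pi_3$ appear.

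\textbf{Gap 2: the last step of your bounding case.} A family of invertible operators over the closed space $T^2$ is an acyclic cycle and carries no index class. In fact, choose extensions $\hat g_i\colon D^2\to\mathcal{R}(\Sigma_{\gamma^i})$ of your loops. The torus map $(t,s)\mapsto g_1(t)\oplus g_2(s)$ extends over $D^2\times S^1$ by $\hat g_1(x)\oplus g_2(s)$. The resulting loop $\hat g_1(0)\oplus g_2(s)\simeq g_1(t_0)\oplus g_2(s)$ is contracted by $g_1(t_0)\oplus\hat g_2(y)$, which stays invertible because $g_1(t_0)$ is. So the torus map is null-homotopic and tells you nothing about $\pi_1$ or $\pi_2$.

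What your idea really yields is the map of pairs
$(D^2\times D^2,\partial(D^2\times D^2))\to(\mathcal{R}(M),\mathcal{R}_{\mathrm{inv}}(M))$, $(x,y)\mapsto\hat g_1(x)\oplus\hat g_2(y)$. This is an element of $\pi_3(\mathcal{R}_{\mathrm{inv}}(M))$, coming from the join $S^1*S^1=S^3$. By multiplicativity of the index for $\slashed{D}_1\hat\otimes 1+1\hat\otimes\slashed{D}_2$, its index difference is $\inddiff(g_1)\cdot\inddiff(g_2)\in\KO^{-8}(*)$. This is non-zero, because a generator of $\KO^{-4}(*)$ squares to $4$ times a generator of $\KO^{-8}(*)$.

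With a proof of that product formula, your route would give the stronger conclusion that $\inddiff$ is non-trivial on $\pi_3$ when both factors are bounding. It still cannot reach the non-bounding cases, which the paper's bundle argument handles uniformly.
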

\begin{remark}\label{homotopy-equivalence-remark}
    It is believed that if two non-empty closed spin manifolds $M_1$ and $M_2$ of dimension at least $3$ are spin bordant to each other, then there is a homotopy equivalence $\mathcal{R}_{\mathrm{inv}}(M_1) \overset{H}{\to}~\mathcal{R}_{\mathrm{inv}}(M_2)$. One should view this as the spinoral analogue of the Chernysh/Walsh surgery result \cite{Chernysh, Walsh, Ebert-Frenck} for positive scalar curvature. Substantial progress towards such a homotopy equivalence $H$ has been made in \cite{grossePederzani, Pederzani}. If this is proven, then Theorem \ref{second main theorem} proves the case $n = 4$ of the conjecture by Bär and Dahl discussed above. To see this, recall that the $\widehat{A}$-genus gives an isomorphism $\Omega_4^{\mathrm{spin}} \overset{\cong}{\to} 2\mathbb{Z}$, where $\Omega_4^{\mathrm{spin}}$ is the spin bordism group in dimension~$4$. Hence, if $M$ is a closed $4$-dimensional spin manifold with vanishing $\widehat{A}$-genus, then it is spin bordant to $\Sigma_3 \times \Sigma_3$, equipped with any spin structure. By Theorem \ref{second main theorem}, we know that $\mathcal{R}_{\mathrm{inv}}(\Sigma_3 \times \Sigma_3)$ is not contractible and therefore, assuming that such a homotopy equivalence $H: \mathcal{R}_{\mathrm{inv}}(\Sigma_3 \times \Sigma_3) \to \mathcal{R}_{\mathrm{inv}}(M)$ exists, we conclude that there is a metric $g$ on $M$ with a harmonic spinor.
\end{remark}
\begin{remark}
    Note also that Theorem \ref{main theorem} shows that such a homotopy equivalence $H$ as mentioned in Remark \ref{homotopy-equivalence-remark} cannot exist in dimension $2$.
\end{remark}
\begin{remark}\label{Ammann-Dahl conjecture}
    There is an unwritten conjecture by Ammann and Dahl which says that in dimension $4$, the space $\mathcal{R}_{\mathrm{inv}}(M)$ is $2$-connected if $M$ has vanishing $\widehat{A}$-genus. If this conjecture is proven, then by the assumptions of Theorem \ref{second main theorem}, we get that the index difference
    \begin{align}
        \mathrm{\inddiff} :\pi_3(\mathcal{R}_{\mathrm{inv}}(\Sigma_{\gamma^1} \times \Sigma_{\gamma^2}), g) \to \KO^{-8}(*)
    \end{align}
    is non-trivial. The last passage of Section \ref{last_section} explains how this is deduced.
\end{remark}

\textbf{Acknowledgements:} The author thanks Bernd Ammann, Jonathan Bowden, Ulrich Bunke, and Oscar Randal-Williams for their helpful discussions on topics related to this article.

\section{Preliminaries and notation}
All spaces used in this section will be assumed sufficiently nice so that $G$-principal bundles are classified by maps into $\mathrm{B}G$, and that the standard theory of covering spaces holds. For example, the reader may assume that all spaces are paracompact, Hausdorff, locally path-connected and semilocally simply connected. For the proofs of Theorem \ref{main theorem} and Theorem \ref{second main theorem}, we will only use manifolds and CW-complexes. Mapping spaces of the form $C^{\infty}(X,Y)$ used in this article will carry the weak $C^{\infty}$ topology.

\subsection{Spin structures on fiber bundles}\label{spin structures on fiber bundles}
In this section, we recall the necessary definitions to state the obstruction for a smooth fiber bundle to admit a spin structure, extending a given spin structure on the fiber. We refer the reader to \cite[Chapter 3]{Ebert-Dissertation} for a detailed presentation. 

    Let $ \theta : \widetilde{\mathrm{GL}^+_n} \to \mathrm{GL}^+_n$ be the connected double covering group, where $\mathrm{GL}^+_n$ denotes the identity component of the general linear group $\mathrm{GL}(n, \mathbb{R})$. Let $E \to X$ be an oriented real vector bundle of rank $n$ and let $\mathrm{GL}^+(E)$ be the bundle of positively oriented frames on $E$. A spin structure $\mathfrak{s} = (\widetilde{\mathrm{GL}^+}(E), \Theta)$ on $E$ is a lift of the structure group of $\mathrm{GL}^+(E)$ along $\theta$. This means that $\Theta : \widetilde{\mathrm{GL}^+}(E) \to \mathrm{GL}^+(E)$ is a $\theta$-equivariant map of principal bundles, covering the identity map on $X$. An isomorphism between two spin structures $\mathfrak{s}_0 = (\widetilde{\mathrm{GL}^+}(E)_0, \Theta_0)$, and $\mathfrak{s}_1 = (\widetilde{\mathrm{GL}^+}(E)_1, \Theta_1)$ on $E$ is an isomorphism $F : \widetilde{\mathrm{GL}^+}(E)_0 \to \widetilde{\mathrm{GL}^+}(E)_1$ of $\widetilde{\mathrm{GL}^+_n}$-principal bundles, covering the identity on $X$, such that $\Theta_0 = \Theta_1 \circ F$. An orientable vector bundle which can be equipped with a spin structure is called a spinnable vector bundle and an oriented vector bundle which is equipped with a spin structure is called a spin vector bundle. An oriented vector bundle $E \to X$ is spinnable if and only if the second Stiefel–Whitney class $w_2(E) \in H^2(X, \mathbb{Z}/2\mathbb{Z})$ vanishes. If $\mathfrak{s} = (\widetilde{\mathrm{GL}^+}(E), \Theta)$ is a spin structure on $E \to X$ and $f : Y \to X$ is a continuous map, then the pullback $f^{*}\mathfrak{s} := (f^{*}\widetilde{\mathrm{GL}^+}(E), f^*\Theta)$ defines a spin structure on $f^*E \to Y$. A spin structure on an oriented smooth manifold $M$ is a spin structure on the tangent bundle $TM \to M$. A spinnable manifold is an orientable smooth manifold which can be equipped with a spin structure and a spin manifold is an oriented smooth manifold which has been equipped with a spin structure. 
    
    Let $\text{Diff}(M)$ denote the group of orientation-preserving diffeomorphisms of $M$. For $f \in \text{Diff}(M)$, we can pull back any spin structure $\mathfrak{s}$ on $M$ to a spin structure $f^*\mathfrak{s}$ on $f^*TM$ as above. Since this is not a spin structure on $M$, we use the isomorphism $df : TM \overset{\cong}{\to} f^*TM$ of oriented vector bundles covering the identity, to define the pullback of a spin structure $\mathfrak{s} = (\widetilde{\mathrm{GL}^+}(M), \Theta)$ on $M$, via $f \in \text{Diff}(M)$, as $f^*\mathfrak{s} := (f^*\widetilde{\mathrm{GL}^+}(M), df^{-1} \circ f^*\Theta)$. The group $\text{Diff}(M)$ acts on $\spin(M)$ from the right via pullback, where $\spin(M)$ is the set of isomorphism classes of spin structures on $M$. The stabilizer in $\Diff{M}$ of the class represented by $\mathfrak{s}$ is denoted by $\Diff{M,\mathfrak{s}}$. If $f_0, f_1 \in \Diff{M}$ are isotopic, then their aforementioned action on $\spin(M)$ coincides. Therefore, we get an action
\begin{align}\label{action of pi0}
    \spin(M) \times& \pi_0(\Diff M) \to \spin(M) \\
     ([\mathfrak{s}],[&f]) \mapsto [f^*\mathfrak{s}], \nonumber
\end{align}
and we note that $\pi_0(\Diff{M, \mathfrak{s}})$ is the stabilizer of $[\mathfrak{s}]$ in $\pi_0(\Diff M)$.

Assume now that $M$ is a connected, oriented, and smooth manifold equipped with some spin structure $\mathfrak{s}$. We say that a spin diffeomorphism of $(M, \mathfrak{s})$ is a diffeomorphism $f : M \to M$, together with a choice of isomorphism $F : f^*\mathfrak{s} \to \mathfrak{s}$ of spin structures on $M$. Let $\SDiff{M, \mathfrak{s}}$ denote the group of spin diffeomorphisms $(f, F)$. Note that the kernel of the forgetful surjection $\SDiff{M, \mathfrak{s}} \to \Diff{M, \mathfrak{s}}$ has two elements and hence we get the group extension
\begin{align}\label{group-extension}
    1 \to \mathbb{Z}/2\mathbb{Z} \to \SDiff{M, \mathfrak{s}} \to \Diff{M, \mathfrak{s}} \to 1.
\end{align}
Passing to classifying spaces, we get the homotopy fiber sequence
\begin{align}\label{fibration of group extension}
    \mathbb{RP}^{\infty} \to \mathrm{B}\SDiff{M, \mathfrak{s}} \to \mathrm{B}\Diff{M, \mathfrak{s}},
\end{align}
and we let $c(M, \mathfrak{s}) \in H^2(\mathrm{B}\hspace{0.4mm}\Diff{M, \mathfrak{s}}, \mathbb{Z}/2\mathbb{Z})$ denote the obstruction class for the existence of a section of (\ref{fibration of group extension}).

We say that a fiber bundle $p : Z \to B$ with fiber $M$ is smooth and oriented if the structure group is $\Diff{M}$. Let $p : Z \to B$ be a smooth, oriented fiber bundle with fiber $M$ being a connected, smooth manifold. For simplicity, we will also assume that $B$ is connected. Letting $Q_Z \to B$ be the corresponding $\Diff{M}$-principal bundle associated to $p: Z \to B$, we define the vertical tangent bundle of $Z \to B$ as $T^{v}Z := Q_Z \times_{\Diff{M}} TM \to Q_Z \times_{\Diff{M}} M \cong Z$, where $\Diff{M}$ acts on $TM$ via the tangent map. This is a vector bundle over $Z$ and the pullback along any fiber inclusion $M \cong Z_b \subset Z$ is isomorphic to the tangent bundle of $M$. Note that by composing with the map $p$, we can also view $T^vZ$ as a bundle of vector bundles $T^v Z \to B$ with fiber $TM \to M$. A spin structure on $Z \overset{p}{\to} B$ extending a spin structure $\mathfrak{s}$ on $M$ is a reduction of the structure group of $Q_Z$ to a $\Diff{M, \mathfrak{s}}$-principal bundle $Q'_Z$ along the inclusion $\Diff{M, \mathfrak{s}} \subset \Diff{M}$, followed by a lift of the structure group of $Q'_Z$ along the forgetful map $\SDiff{M,\mathfrak{s}} \to \Diff{M, \mathfrak{s}}$ to an $\SDiff{M,\mathfrak{s}}$-principal bundle $P_Z \to B$. A spin structure on $Z \to B$ induces a spin structure on the vector bundle $T^vZ \to Z$. If $p: Z \to B$ is equipped with a spin structure that extends the spin structure $\mathfrak{s}$ on $M$, then we say that $p : Z \to B$ is a spin $(M, \mathfrak{s})$-bundle.

The bundle $p: Z \to B$ is classified by a map $B \to \mathrm{B}\Diff{M}$, which we also denote by $p$. The induced map on fundamental groups gives a homomorphism
\begin{align*} 
    \rho : \pi_1(B, b_0) \overset{p_*}{\to} \pi_1(\mathrm{B}\Diff{M}, *) \overset{\cong}{\to} \pi_0(\Diff{M}),
\end{align*}
where the second map is the connecting homomorphism coming from the long exact sequence of homotopy groups corresponding to the fibration $E\Diff{M} \to \mathrm{B}\Diff{M}$. The map $\rho$ is called the monodromy representation of the bundle $p$ at the base point $b_0$. Since we assume $B$ to be connected, we will ignore any reference to a base point and simply refer to the monodromy representation. If the monodromy representation $\rho$ is contained in $\pi_0(\Diff{M,s})$, we write $\hat{p} : B \to B\Diff{M,s}$ to denote the classifying map corresponding to a $\Diff{M,s}$-reduction of the underlying $\Diff{M}$-principal bundle corresponding to $Z \to B$. The following proposition is due to Ebert \cite[Corollary 3.5.5]{Ebert-Dissertation}.

\begin{proposition}[Ebert, 2006]\label{spin-along-fibers}
    Let $p : Z \to B$ be a smooth fiber bundle with fiber $M$. Then, for any spin structure $\mathfrak{s}$ on $M$, there exists a spin structure on $p : Z \to B$, extending $\mathfrak{s}$, if and only if the image of the monodromy representation $\rho$ is contained inside $\pi_0(\Diff{M, \mathfrak{s}})$ and the class $\hat{p}^*c(M,\mathfrak{s}) \in H^2(B, \mathbb{Z}/2\mathbb{Z})$ vanishes.  
\end{proposition} 

\subsection{Riemannian metrics along the fibers of a fiber bundle}\label{metrics-along-fibers}
We continue with the notation from the previous section. Recall that $\mathcal{R}(M)$ denotes the space of Riemannian metrics on $M$. We let $\Diff{M}$ act on $\mathcal{R}(M)$ from the left via pushforward. This means that the action is given by
\begin{align}\label{diff-metric action}
    \Diff{M} \times \mathcal{R}(M) &\to \mathcal{R}(M) \\
    (f,g)& \mapsto (f^{-1})^*g =: f_*g \nonumber.
\end{align}
We form the associated fiber bundle $\mathcal{R}^{\mathrm{fib}}(T^vZ) = Q_Z \times_{\Diff{M}} \mathcal{R}(M) \to B$. If $Z \to B$ admits a spin structure extending the spin structure $\mathfrak{s}$ on the fiber $M$, then we will identify $P_Z \times_{\SDiff{M, \mathfrak{s}}} \mathcal{R}(M)$ and $Q_Z \times_{\Diff{M}} \mathcal{R}(M)$ via $[L \times \id]$, where $L : P_Z \to Q_Z$ denotes the data of the spin structure on $Z \to B$ as defined in the previous section. A continuous section of $\mathcal{R}^{\mathrm{fib}}(T^vZ) \to B$ is called a continuous family of Riemannian metrics along the fibers of $p : Z \to B$. Note that since $\mathcal{R}(M)$ is contractible, such a section always exists. Note also that a Riemannian metric $g : B \to \mathcal{R}^{\mathrm{fib}}(T^vZ)$ along the fibers of $Z \to B$ induces a bundle metric on the vector bundle $T^vZ \to Z$, and makes the fiber bundle $Z \to B$ into a bundle of Riemannian manifolds.

For any diffeomorphism-invariant subset $\mathcal{S}(M) \subset \mathcal{R}(M)$, we can just as above form the associated fiber bundle $\mathcal{S}^{\mathrm{fib}}(T^vZ) \to B$, which is a subbundle of $\mathcal{R}^{\mathrm{fib}}(T^vZ)$. We denote sections $B \to \mathcal{R}^{\mathrm{fib}}(T^vZ)$ which restrict to $C \to \mathcal{S}^{\mathrm{fib}}(T^vZ)\vert_C$ for some $C \subset B$ as $(B,C) \to (\mathcal{R}^{\mathrm{fib}}(T^vZ),\mathcal{S}^{\mathrm{fib}}(T^vZ))$ and call them sections of $(\mathcal{R}^{\mathrm{fib}}(T^vZ),\mathcal{S}^{\mathrm{fib}}(T^vZ)) \to (B,C)$. Pointed sections $g$ with $b_0 \in C$ and $g(b_0) = [h,g_0]$ are denoted as
\begin{align*}
    g : (B,C, b_0) \to (\mathcal{R}^{\mathrm{fib}}(T^vZ),\mathcal{S}^{\mathrm{fib}}(T^vZ), [h,g_0]).
\end{align*}

Let $b_0 \in B$ be a fixed $0$-cell and let $B^{\ell}$ denote the $\ell$-skeleton of $B$. If $\mathcal{S}(M)$ is $k$-connected, then for every $[h, g_0] \in \mathcal{S}^{\mathrm{fib}}(T^vZ)$, there is a section
\begin{align*}
    g : (B, B^{k+1}, b_0) \to (\mathcal{R}^{\mathrm{fib}}(T^vZ), \mathcal{S}^{\mathrm{fib}}(T^vZ), [h,g_0]).
\end{align*}
\begin{remark}
    The previous claim follows from standard obstruction theory, and is just a statement about fiber bundles over CW-complexes. An elementary way of proving this is via induction over the cells, see for example \cite[Page 21]{husemoller}.
\end{remark}
In this article, we will apply the above discussion to the space $\mathcal{R}_{\text{inv}}(M) \subset \mathcal{R}(M)$. We will use the notation $\mathcal{R}^{\text{fib}}_{\text{inv}}(T^vZ): = Q_Z \times_{\Diff{M}} \mathcal{R}_{\text{inv}}(M)$.

\subsection{Metric spin structures}\label{metric spin structures}
 Let $\theta : \spin(n) \to \mathrm{SO}(n)$ be the connected double covering group, which is the co-restriction of the homomorphism $\theta$ given in Section \ref{spin structures on fiber bundles}, along the standard inclusion $\mathrm{SO}(n) \subset \mathrm{GL}^+_n$.  Let $E \to X$ be an oriented real vector bundle of rank $n$, with a positive definite bundle metric $g$ and let $\mathrm{SO}(E)$ be the $\mathrm{SO}(n)$-principal bundle of positively oriented orthonormal frames on $E$. A metric spin structure $\mathfrak{s}^g = ({\mathrm{Spin}}^g(E), \Theta)$ on $E$ is a lift of the structure group of $\mathrm{SO}(E)$ along $\theta$. A metric spin structure on an oriented Riemannian manifold is a metric spin structure on the tangent bundle of $M$. Let $\mathfrak{s}^g = ({\mathrm{Spin}}^g(E), \Theta)$ be a metric spin structure on an oriented vector bundle $E \to X$ of rank $n$. We define a corresponding Clifford bundle
\begin{align*}
    \slashed{\mathfrak{S}}^g(E) = \mathrm{Spin}^g(E) \times_{l} C\ell_n,
\end{align*}
where the representation $l$ is given by left multiplication. In case $E = TM$ is the tangent bundle of an oriented Riemannian spin manifold, then the bundle $\slashed{\mathfrak{S}}^g(M):= \slashed{\mathfrak{S}}^g(TM)$ is called the $\cl_n$-linear spinor bundle on the Riemannian spin manifold $(M, g, \mathfrak{s}^g)$. The bundle $\slashed{\mathfrak{S}}^g(M)$ carries the standard structures of Clifford multiplication $c : T^*M \otimes \slashed{\mathfrak{S}}^g(M) \to \slashed{\mathfrak{S}}^g(M)$ and a connection $\widetilde{\nabla}^g$ obtained by lifting the Levi-Civita connection on $TM$. This gives rise to the Dirac operator
\begin{align*}
    \slashed{D}^g : \Gamma(\slashed{\mathfrak{S}}^g(M)) \overset{\widetilde{\nabla}^g}{\to} \Gamma(T^*M \otimes \slashed{\mathfrak{S}}^g(M)) \overset{c}{\to} \Gamma(\slashed{\mathfrak{S}}^g(M)).
\end{align*}
The key feature of the $C\ell_n$-linear spinor bundle $\slashed{\mathfrak{S}}^g(M)$ is that it has a parallel right action of $C\ell_n$ which commutes with the action given by $c$, and therefore makes the Dirac operator $C\ell_n$-linear. Hence the kernel of the Dirac operator is a $C\ell_n$-module.

 Given an oriented vector bundle $E \to X$ with a spin structure $\mathfrak{s} = (\widetilde{\mathrm{GL}^+}(E), \Theta)$, we get for each choice of positive definite bundle metric $g$ on $E \to X$, a metric spin structure $\mathfrak{s}^g = ({\mathrm{Spin}}^g(E), \Theta)$, given as the restriction
\[
\begin{tikzcd}
	{\mathrm{Spin}^g(E) \coloneqq}
	&[-3em]
	{\Theta^{-1}(\mathrm{SO}(E))}
	& {\widetilde{\mathrm{GL}^+}(E)} \\
	&
	{\mathrm{SO}(E)}
	& {{\mathrm{GL}^+}(E).}
	\arrow[hook, from=1-2, to=1-3]
	\arrow["\Theta"', from=1-2, to=2-2]
	\arrow["\Theta", from=1-3, to=2-3]
	\arrow[hook, from=2-2, to=2-3]
\end{tikzcd}
\]
In fact, since the inclusion $\mathrm{SO}(n) \hookrightarrow \mathrm{GL}^+_n$ is a homotopy equivalence, every metric spin structure arises in the above fashion.

Let $Z \to B$ be a fiber bundle, whose fiber is a smooth manifold of dimension $n$. Suppose that $Z \to B$ is equipped with a spin structure extending a given spin structure $\mathfrak{s}$ on $M$. A choice of Riemannian metric $g : B \to \mathcal{R}^{\text{fib}}(T^vZ)$ along the fibers of $Z \to B$ gives a positive definite bundle metric on $T^vZ \to Z$, and consequently induces a metric spin structure $\tilde{\mathfrak{s}}^g$ on $T^vZ \to Z$. As above, this gives a Clifford bundle $\slashed{\mathfrak{S}}^g(T^vZ) \to Z$, and composing with the map $Z \to B$, we get a fiber bundle $\slashed{\mathfrak{S}}^g(T^vZ) \to B$, whose fiber over a point $b \in B$ is the $\cl_n$-linear spinor bundle $\slashed{\mathfrak{S}}^g(T(Z_b)) \to Z_b$ associated to the Riemannian spin manifold $(Z_b, g\vert_{Z_b}, \tilde{\mathfrak{s}}^g\vert_{Z^b})$.

\subsection{Atiyah-Singer family index theorem \& the index difference}
The paper \cite{Ebert2016} is especially well suited for the purposes of this paper, and we follow it closely in this section (see also \cite{Botvinnik-Ebert-Randal-Williams}). We will not need the full power of \cite{Ebert2016}, and to not obscure the conceptually clear picture, we only recall the parts which are necessary for this article.

Let $(X, Y)$ be a CW-pair. A $p$-cycle on $(X, Y)$ is a pair $(V, F)$, where $V \to X$ is a~$\cl_p$-Hilbert bundle, and $F$ is a Fredholm family on $V \to X$, which is invertible over $Y$. There are straightforward notions of pullbacks, sums, and isomorphisms of $p$-cycles. Two $p$-cycles $(V_0, F_0)$ and $(V_1, F_1)$ are called concordant if there is a $ p$-cycle $(V,F)$ on $(X \times [0,1], Y \times [0,1])$ such that the restriction of $(V, F)$ to $X \times \{i\}$ is isomorphic to $(V_i, F_i)$, for $i \in \{0,1\}$. A $p$-cycle $(V,F)$ on $(X,Y)$ is called acyclic if $F$ is invertible over $X$. The set of concordance classes of $p$-cycles forms an abelian monoid, and the quotient of this monoid by the submonoid of acyclic $p$-cycles forms an abelian group, which we will call $F^p(X,Y)$. Further, $F^p$ defines a functor
\begin{align}
    F^p : \mathbf{CWPair}^{\mathrm{op}} \to \mathbf{Ab}, 
\end{align}
and in \cite{Ebert2016}, it is shown that there is a natural isomorphism of functors
\begin{align}\label{natural-iso}
    \mathrm{Ind}:F^p \Rightarrow \KO^{-p}.
\end{align}

Let $Z$ and $B$ be CW-complexes and let $ p :Z \to B$ be a fiber bundle, whose fiber is a closed smooth manifold $M$ of dimension $n$. Suppose that the fiber bundle $Z \to B$ is equipped with a spin structure, extending some spin structure on $M$. A choice of Riemannian metric $g : B \to \mathcal{R}^{\text{fib}}(T^vZ)$ along the fibers of $Z \to B$ gives, as in Section \ref{metric spin structures}, a bundle $\slashed{\mathfrak{S}}^g(T^vZ) \to B$ of Clifford-linear spinor bundles, where each fiber $\slashed{\mathfrak{S}}^g(T^vZ)_b \to Z_b$ is the $\cl_n$-linear spinor bundle on the manifold $Z_b$ with respect to the induced spin structure and metric. The Hilbert spaces $L^2(\slashed{\mathfrak{S}}^g(T^vZ)_b \to Z_b)$ form a $\cl_n$-Hilbert bundle $L^2(\slashed{\mathfrak{S}}^g(T^vZ)) \to B$, and the family $D^g : b \mapsto \frac{\slashed{D}^g_b}{(1+(\slashed{D}_{b}^{g})^2)^{1/2}}$ of fiberwise Dirac operators defines a Fredholm family on $L^2(\slashed{\mathfrak{S}}^g(T^vZ)) \to B$. Hence the pair $(L^2(\slashed{\mathfrak{S}}^g(T^vZ)), D^g)$ defines an element $[(L^2(\slashed{\mathfrak{S}}^g(T^vZ)), D^g)] \in F^n(B)$, and the corresponding element $\mathrm{Ind}([(L^2(\slashed{\mathfrak{S}}^g(T^vZ)), D^g)]) \in \KO^{-n}(B)$ is called the analytic index of the Fredholm family $D^g$. The spin structure on $Z \to B$ induces a map $p_! : \KO(Z) \to \KO^{-n}(B)$ and the Atiyah-Singer family index theorem states that
\begin{align}\label{Atiyah-Singer}
    \mathrm{Ind}([(L^2(\slashed{\mathfrak{S}}^g(T^vZ)), D^g)]) = p_!(1).
\end{align}
If the Riemannian metric along the fibers, $g$, that we started with is such that $g$ restricts to a section of $\mathcal{R}^{\text{fib}}_{\text{inv}}(T^vZ)\vert_C \to C$ for some sub-complex $C \subset B$, then by the same procedure as above, we get an element in $F^n(B,C)$, which we also denote by $[(L^2(\slashed{\mathfrak{S}}^g(T^vZ)), D^g)]$. It will be clear from context whether we consider it as an element of $F^n(B,C)$ or $F^n(B)$, and if both are used at the same time, then we will distinguish them by pulling one back with the canonical inclusion  $\iota: (B, \emptyset) \to (B, C)$, which corresponds to forgetting that $g$ restricts to a section of $\mathcal{R}^{\text{fib}}_{\text{inv}}(T^vZ)\vert_C \to C$. 

We now define the index difference. Let $g : (D^{\ell+1}, S^{\ell}, *) \to (\mathcal{R}(M), \mathcal{R}_{\mathrm{inv}}(M), g_0)$ represent an element $[g] \in \pi_{\ell+1}(\mathcal{R}(M), \mathcal{R}_{\mathrm{inv}}(M), g_0) \cong \pi_{\ell}(\mathcal{R}_{\mathrm{inv}}(M), g_0)$. The family $g$ defines a Riemannian metric along the fibers of the trivial fiber bundle $\underline{M} := D^{\ell+1} \times M \to D^{\ell+1}$. As before, we obtain a $\cl_n$-Hilbert bundle $L^2(\slashed{\mathfrak{S}}^g(T^v\underline{M})) \to D^{\ell + 1}$ together with a Fredholm family $D^g$, which together form an element $[(L^2(\slashed{\mathfrak{S}}^g(T^v\underline{M})), D^g)] \in F^n(D^{\ell+1}, S^{\ell})$, and consequently defines an element $\mathrm{Ind}[(L^2(\slashed{\mathfrak{S}}^g(T^v\underline{M})), D^g)] \in \KO^{-n}(D^{\ell+1}, S^{\ell}) \cong \KO^{-(n+\ell+1)}(*)$. By homotopy-invariance of the index, this defines a map
\begin{align}
    \mathrm{\inddiff} :\pi_{\ell}(\mathcal{R}_{\mathrm{inv}}(M), g_0) &\to \KO^{-(n+\ell+1)}(*), \\
    [g] &\mapsto \mathrm{Ind}[(L^2(\slashed{\mathfrak{S}}^g(T^v\underline{M})), D^g)] \nonumber
\end{align}
which is called the index difference.

\section{Surface bundles over surfaces with non-zero signature}
In this section, we review known results regarding surface bundles over surfaces with non-zero signature, which will be needed for the proof of the main theorem. Such bundles were first constructed by Kodaira and Atiyah \cite{Kod67, Atiyah69}. Meyer \cite[Satz 3]{Meyer}, showed that for any $\gamma \geq 3$, there is a surface bundle $Z \to \Sigma_h$, with fiber $\Sigma_{\gamma}$, having $\sigma(Z) \neq 0$. The existence of such surface bundles with the extra condition of being spin is then a standard consequence, which we phrase as Lemma \ref{spin-kodiara-lemma} below.

\begin{lemma}\label{spin-kodiara-lemma}
    Let $\Sigma_{\gamma}$ be a closed, oriented surface of genus $\gamma \geq 3$. For any spin structure $\mathfrak{s}$ on $\Sigma_{\gamma}$, there exists a smooth fiber bundle $p : Z \to \Sigma_{h}$ with fiber $\Sigma_{\gamma}$ such that
    \begin{enumerate}
        \item $\sigma(Z) \neq 0$, and \label{signature}
        \item there is a spin structure on $p : Z \to \Sigma_{h}$ which extends $\mathfrak{s}$.
    \end{enumerate}
\end{lemma}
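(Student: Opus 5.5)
Start from Meyer's theorem: for $\gamma \geq 3$ there is a smooth oriented surface bundle $p_0 : Z_0 \to \Sigma_{h_0}$ with fiber $\Sigma_\gamma$ and $\sigma(Z_0) \neq 0$. The plan is to pass to a finite covering of the base. This repairs both obstructions in Proposition \ref{spin-along-fibers} while multiplying the signature by the degree of the cover, so it stays non-zero. Concretely, let $\rho_0 : \pi_1(\Sigma_{h_0}) \to \pi_0(\Diff{\Sigma_\gamma})$ be the monodromy. The action \eqref{action of pi0} of $\pi_0(\Diff{\Sigma_\gamma})$ on the finite set $\spin(\Sigma_\gamma)$ (of cardinality $2^{2\gamma}$) shows that $\pi_0(\Diff{\Sigma_\gamma,\mathfrak{s}})$ has finite index. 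Hence $H := \rho_0^{-1}(\pi_0(\Diff{\Sigma_\gamma,\mathfrak{s}}))$ is a finite index subgroup of $\pi_1(\Sigma_{h_0})$.

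Let $q_1 : \Sigma_{h_1} \to \Sigma_{h_0}$ be the finite covering corresponding to $H$; it is again a closed oriented surface. The pullback bundle $Z_1 = q_1^* Z_0 \to \Sigma_{h_1}$ has monodromy $\rho_0 \circ (q_1)_*$, whose image lies in $\pi_0(\Diff{\Sigma_\gamma,\mathfrak{s}})$. This handles the first condition of Proposition \ref{spin-along-fibers} and gives a classifying map $\hat p_1 : \Sigma_{h_1} \to \mathrm{B}\Diff{\Sigma_\gamma,\mathfrak{s}}$.

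Next, kill the class $\hat p_1^* c(\Sigma_\gamma,\mathfrak{s}) \in H^2(\Sigma_{h_1};\mathbb{Z}/2\mathbb{Z}) \cong \mathbb{Z}/2\mathbb{Z}$. Take any connected double cover $q_2 : \Sigma_{h} \to \Sigma_{h_1}$. Since $q_2$ has degree $2$, the induced map $q_2^* : H^2(\Sigma_{h_1};\mathbb{Z}/2) \to H^2(\Sigma_h;\mathbb{Z}/2)$ is multiplication by $\deg q_2 = 2 = 0$. The classifying map of the pulled-back bundle $Z = q_2^* Z_1$ is $\hat p_1 \circ q_2$, and its monodromy still lies in the stabilizer, so the obstruction class vanishes. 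By Proposition \ref{spin-along-fibers}, $Z \to \Sigma_h$ then admits a spin structure extending $\mathfrak{s}$.

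Finally, the total space $Z$ is a finite covering of $Z_0$ of degree $d = \deg(q_1)\deg(q_2)$, as a pullback of the bundle along a $d$-sheeted covering. The signature is multiplicative in finite covers of closed oriented manifolds (Hirzebruch signature theorem, or directly for bundles via Meyer's cocycle), so $\sigma(Z) = d\,\sigma(Z_0) \neq 0$.

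The main obstacle I expect is making sure the first step is legitimate. The classifying-map formalism needs the stabilizer reduction and the $\hat p$ to be compatible with pullback along coverings, and that naturality has to be checked. The rest is degree arguments.
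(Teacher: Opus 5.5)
Your proposal is correct and follows the paper's proof essentially step by step. First, pass to the finite cover of the base corresponding to $\rho^{-1}(\pi_0(\Diff{\Sigma_\gamma,\mathfrak{s}}))$, so that the monodromy lies in the stabilizer; then pull back along a double cover to kill the $\mathbb{Z}/2\mathbb{Z}$-obstruction class, with the signature staying non-zero because it is multiplied by the covering degree. You also make explicit two facts the paper leaves implicit: a degree-$2$ map acts by zero on $H^2(-;\mathbb{Z}/2\mathbb{Z})$ of closed surfaces, and the signature is multiplicative under finite covers. Taking the double cover unconditionally, rather than only when the class is non-zero, is harmless.
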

\begin{proof}
    Fix some $\gamma \geq 3$ and let $q : E \to \Sigma_{g}$ be a smooth fiber bundle with fiber $\Sigma_{\gamma}$ such that $\sigma(E) \neq 0$ \cite[Satz 3]{Meyer}. Let $b$ be a base point of $\Sigma_g$ and let $\rho : \pi_1(\Sigma_{g}, b) \to \pi_0(\mathrm{Diff}(\Sigma_{\gamma}))$ denote the monodromy representation of the bundle $q : E \to \Sigma_g$. Fix any spin structure $\mathfrak{s}$ on $\Sigma_{\gamma}$. Since the number of isomorphism classes of spin structures on $\Sigma_{\gamma}$ is finite, the orbit of $\mathfrak{s}$ under the action of the group of isotopy classes of diffeomorphisms (see (\ref{action of pi0})) is a finite set, and therefore the stabilizer $\pi_0(\Diff{\Sigma_{\gamma}, \mathfrak{s}})$ is a finite index subgroup. Hence the preimage $\rho^{-1}(\pi_0(\Diff{\Sigma_{\gamma}, \mathfrak{s}}))$ is a finite index subgroup of $\pi_1(\Sigma_g, b)$. Let $\Sigma_{g'} \to \Sigma_g$ be the finite covering corresponding to the finite index subgroup $\rho^{-1}(\pi_0(\Diff{\Sigma_{\gamma}, \mathfrak{s}})) \subset \pi_1(\Sigma_g, b)$. Lift $q : E \to \Sigma_g$ along this covering and  let $q' : E' \to \Sigma_{g'}$ be the corresponding smooth $\Sigma_{\gamma}$-bundle. Note that $\sigma(E') \neq 0$, and that the monodromy of $q': E' \to \Sigma_{g'}$ has image contained inside $\pi_0(\Diff{\Sigma_{\gamma}, \mathfrak{s}})$. Let $\hat{q}' : \Sigma_{g'} \to \mathrm{B}\Diff{\Sigma_{\gamma}, \mathfrak{s}}$ be a classifying map for $q'$. If the class $(\hat{q}')^*c(\Sigma_{\gamma}, \mathfrak{s})$ is zero, then by Proposition \ref{spin-along-fibers}, there exists a spin structure on $q' : E' \to \Sigma_{g'}$ extending $\mathfrak{s}$. If the class $(\hat{q}')^*c(\Sigma_{\gamma}, \mathfrak{s})$ is not zero, then we take a twofold covering $\Sigma_{g''} \to \Sigma_{g'}$ and lift the bundle $q' : E' \to \Sigma_{g'}$ over this covering, to get the bundle $q'': E'' \to \Sigma_{g''}$, which satisfies $(\hat{q}'')^*c(\Sigma_{\gamma}, \mathfrak{s}) = 0$, where $\hat{q}'' : \Sigma_{g''} \to \mathrm{B}\Diff{\Sigma_{\gamma}, \mathfrak{s}}$ is the classifying map for $q''$. Note again that $\sigma(E'') \neq 0$. In either case, we get a bundle $p : Z \to \Sigma_{h}$ which satisfies the desired conditions.
\end{proof}

The reader who is only interested in the first part of Theorem \ref{main theorem}, namely the non-triviality of the index difference, is ready to read the proof of this in Section \ref{last_section} below. To prove that the index difference in Theorem \ref{main theorem} is surjective if the genus is at least $5$, we need that the signature of the corresponding spin surface bundle is equal to $16$. This is the content of Lemma \ref{spin-signature-16} below, which follows from the work of Putman \cite{Putman}. However, we will instead combine the work of Randal-Williams \cite{RANDALWILLIAMS, RandalWilliams-stability} with the work of Sierra \cite{Sierra} to deduce the aforementioned lemma. 

For the convenience of the reader, we recall some basic facts regarding the signature of surface bundles in the specific cases relevant for this exposition. We also deduce the content of Lemma \ref{spin-signature-16} without assuming much prior knowledge of surface bundles.

Let $p: Z \to B$ be an oriented surface bundle over a surface, and let $T^vZ \to Z$ denote the vertical tangent bundle. This is an oriented real $2$-plane bundle, and can hence be viewed as a complex line bundle. Thus, the first Chern class and the Euler class of $T^vZ \to Z$ agree, i.e. we have that
\begin{align*}
    c_1(T^vZ) = e(T^vZ) \in H^2(Z; \mathbb{Z}).
\end{align*}
Atiyah \cite{Atiyah69} shows, using the Hirzebruch signature formula, that
\begin{align*}
    \sigma(Z) = \frac{\esc{c_1(T^vZ)^2, [Z]}}{3}.
\end{align*}
Let $p_! : H^4(Z; \mathbb{Z}) \to H^2(B; \mathbb{Z})$ be the Gysin map in integral cohomology, and put
\begin{align*}
    \kappa_1(p) := p_!(e(T^vZ)^2) = p_!(c_1(T^vZ)^2) \in H^2(B; \mathbb{Z}).
\end{align*}
Hence, we see that the signature can also be computed as
\begin{align}\label{signature-kappa}
    \sigma(Z) = \frac{\esc{\kappa_1(p), [B]}}{3}.
\end{align}
Let $\mathcal{U}:\mathrm{E}_{\gamma} \to \mathrm{BDiff}(\Sigma_{\gamma})$ be the universal oriented $\Sigma_{\gamma}$-bundle and let $T^v\mathrm{E}_{\gamma} \to \mathrm{E}_{\gamma}$ be the corresponding vertical tangent bundle (see Section \ref{spin structures on fiber bundles}). Let $e(T^v\mathrm{E}_{\gamma}) \in H^2(\mathrm{E}_{\gamma};\mathbb{Z})$ be the Euler class of this $2$-plane bundle and put
\begin{align*}
    \kappa_1 := \mathcal{U}_!(e(T^v\mathrm{E}_{\gamma})^2) \in H^2(\mathrm{BDiff}(\Sigma_{\gamma}); \mathbb{Z}).
\end{align*}
Let $f :B \to \mathrm{BDiff}(\Sigma_{\gamma})$ be the classifying map for an oriented $\Sigma_{\gamma}$-bundle $p : Z \to B$. By construction, we have that $f^*\kappa_1= \kappa_1(p)$, and so by \eqref{signature-kappa}, we have that
\begin{align}\label{signature-universal-kappa}
    \sigma(Z) = \frac{\esc{f^*\kappa_1, [B]}}{3} = \frac{\esc{\kappa_1, f_*[B]}}{3}.
\end{align}
Since every class in the second homology of a space can be represented by a map from a closed oriented surface, it is enough to study the pairing
\begin{align}
    \sign_{\gamma}: H_2(\mathrm{BDiff}(\Sigma_{\gamma}); \mathbb{Z}) &\to \mathbb{Z} \\ \nonumber
    x& \mapsto \frac{\esc{\kappa_1, x}}{3},
\end{align}
in order to study the signature of surface bundles over surfaces.

We will also need to consider surfaces with one boundary component. Let $\Sigma_{\gamma, 1}$ be an oriented surface of genus $\gamma$ with one boundary component, and let
\begin{align*}
    \mathrm{Diff}_{\partial}({\Sigma_{\gamma, 1}})
\end{align*}
be the group of orientation-preserving diffeomorphisms that pointwise fix a neighborhood of the boundary. Let $\mathfrak{s}$ be any spin structure on $\Sigma_{\gamma, 1}$ and let
\begin{align*}
    \mathrm{Diff}_{\partial}({\Sigma_{\gamma, 1}}, \mathfrak{s}) \subset \mathrm{Diff}_{\partial}({\Sigma_{\gamma, 1}})
\end{align*}
be the subgroup consisting of those diffeomorphisms $f$ which preserve the spin structure $\mathfrak{s}$, meaning that $f^*{\mathfrak{s}} \cong \mathfrak{s}$. Fix a trivialization of $\mathfrak{s}$ over the boundary of $\Sigma_{\gamma, 1}$ and note that the two possible choices of isomorphism $f^*{\mathfrak{s}} \cong \mathfrak{s}$ act as the identity, respectively minus the identity on this boundary trivialization. Write $+_f$ for the isomorphism which acts as the identity on this boundary trivialization. Let $\mathrm{SDiff}_{\partial}({\Sigma_{\gamma, 1}}, \mathfrak{s})$ be the group of spin diffeomorphisms, defined analogously as in Section \ref{spin structures on fiber bundles}. With this, we can define the lift:
\begin{align}\label{spin-lift}
    \mathrm{Diff}_{\partial}({\Sigma_{\gamma, 1}}, \mathfrak{s}) & \to \mathrm{SDiff}_{\partial}({\Sigma_{\gamma, 1}}, \mathfrak{s}) \\
    f &\mapsto (f, +_f).
\end{align}
This means that when considering diffeomorphism groups which act as the identity on the boundary, the $\mathbb{Z}/2\mathbb{Z}$-ambiguity concerning the choice of spin isomorphism discussed around \eqref{group-extension} and \eqref{fibration of group extension} in Section \ref{spin structures on fiber bundles} can be avoided; this is the main reason why we consider manifolds with boundary in this context. Hence, any map $f : B \to \mathrm{BDiff}_{\partial}({\Sigma_{\gamma, 1}}, \mathfrak{s})$ gives rise to a $\Sigma_{\gamma, 1}$-bundle $E_{\partial} \to B$ which has a spin structure that extends the given spin structure $\mathfrak{s}$ on $\Sigma_{\gamma, 1}$ (see Section \ref{spin structures on fiber bundles}). Since the spin structure $\mathfrak{s}$ induces a bounding spin structure on the boundary, the spin structure $\mathfrak{s}$ induces a spin structure on $\Sigma_{\gamma}$, which we continue to denote by $\mathfrak{s}$. Since any $f \in \mathrm{Diff}_{\partial}({\Sigma_{\gamma, 1}}, \mathfrak{s})$ preserves a neighborhood of the boundary, we can cap off each fiber of $E_{\partial} \to B$ with a disk and obtain a spin $\Sigma_{\gamma}$-bundle $E \to B$ with $\sigma(E) = \sigma(E_{\partial})$. Let
\begin{align}\label{capping}
    \mathrm{BDiff}_{\partial}({\Sigma_{\gamma, 1}}, \mathfrak{s}) \to \mathrm{BSDiff}(\Sigma_{\gamma}, \mathfrak{s})
\end{align}
be the map induced by the map \eqref{spin-lift}, together with the map induced by capping off the boundary component of $({\Sigma_{\gamma, 1}}, \mathfrak{s})$ with a disk and extending any diffeomorphism $f \in \mathrm{Diff}_{\partial}({\Sigma_{\gamma, 1}}, \mathfrak{s})$ by the identity. We write
\begin{align*}
    \mathrm{cap}_{\gamma}: H_2(\mathrm{BDiff}_{\partial}({\Sigma_{\gamma, 1}, \mathfrak{s}});\mathbb{Z}) \to H_2(\mathrm{BSDiff}(\Sigma_{\gamma}, \mathfrak{s});\mathbb{Z})
\end{align*}
for the induced map in homology. This discussion shows that in order to find a spin $(\Sigma_{\gamma}, \mathfrak{s})$-bundle over a surface with signature equal to $16$, it suffices to find an element 
\begin{align*}
    x \in H_2(\mathrm{BDiff}_{\partial}(\Sigma_{\gamma, 1}, \mathfrak{s});\mathbb{Z})
\end{align*}
such that
\begin{align*}
    \sign_{\gamma}(\mathrm{cap}_{\gamma}(x)) = 16.
\end{align*}
Fix a decomposition $\Sigma_{\gamma +1, 1} = \Sigma_{\gamma, 1} \cup_{S^1} \Sigma_{1, 2}$, where $\Sigma_{1,2}$ is a surface of genus $1$ with $2$ boundary components, let $\mathfrak{s}'$ be a spin structure on $\Sigma_{\gamma +1, 1}$, and write $\mathfrak{s}$ for the induced spin structure on $\Sigma_{\gamma, 1}$. Extending any diffeomorphism $f \in \mathrm{Diff}_{\partial}({\Sigma_{\gamma, 1}}, \mathfrak{s})$ via the identity to a diffeomorphism of $\Sigma_{\gamma +1, 1}$ gives a stabilization map
\begin{align*}
    \mathrm{BDiff}_{\partial}({\Sigma_{\gamma, 1}}, \mathfrak{s}) \to \mathrm{BDiff}_{\partial}({\Sigma_{\gamma+1, 1}}, \mathfrak{s}')
\end{align*}
and we write 
\begin{align*}
    \mathrm{stab}_{\gamma} : H_2(\mathrm{BDiff}_{\partial}({\Sigma_{\gamma, 1}}, \mathfrak{s});\mathbb Z) \to H_2(\mathrm{BDiff}_{\partial}({\Sigma_{\gamma+1, 1}}, \mathfrak{s}');\mathbb Z)
\end{align*}
for the induced map in homology. Sometimes we will write $\mathfrak{s}$ both for the spin structure on $\Sigma_{\gamma +1, 1}$ and $\Sigma_{\gamma, 1}$. This notation will implicitly mean that the spin structure on $\Sigma_{\gamma +1, 1}$ is an extension of the spin structure on $\Sigma_{\gamma, 1}$ in such a way that does not change the Arf invariant. Note that by construction, the signature is unaffected by stabilizing, i.e. we have that
\begin{align}\label{sign-stab}
    \sign_{\gamma+1}(\mathrm{cap}_{\gamma+1}(\mathrm{stab}_{\gamma}(x))) = \sign_{\gamma}(\mathrm{cap}_{\gamma}(x)), 
\end{align}
for any $x \in H_2(\mathrm{BDiff}_{\partial}({\Sigma_{\gamma, 1}}, \mathfrak{s});\mathbb Z)$.

One reason surface bundles are somewhat more manageable than general manifold bundles is the following simplification. We write  
\begin{align*}
    \Gamma_{\gamma, 1}[\mathfrak{s}] &:= \pi_0(\mathrm{Diff}_{\partial}({\Sigma_{\gamma, 1}}, \mathfrak{s})), \text{ and} \\
    \Gamma_{\gamma}[\mathfrak{s}] &:= \pi_0(\mathrm{SDiff}({\Sigma_{\gamma}}, \mathfrak{s}))
\end{align*}
to denote the corresponding groups of isotopy classes, which are called the spin mapping class groups. Let $\mathrm{Diff}_{\partial}^0({\Sigma_{\gamma, 1}}, \mathfrak{s})$ be the identity component of $\mathrm{Diff}_{\partial}({\Sigma_{\gamma,1}}, \mathfrak{s})$. By the work of Earle and Eells \cite{Earle-Eells-69, Earle-Schatz-70}, we have that $\mathrm{Diff}_{\partial}^0({\Sigma_{\gamma, 1}}, \mathfrak{s})$ is contractible, and hence the short exact sequence
\begin{align*}
    1 \to \mathrm{Diff}_{\partial}^0({\Sigma_{\gamma, 1}}, \mathfrak{s}) \to \mathrm{Diff}_{\partial}({\Sigma_{\gamma, 1}}, \mathfrak{s}) \to \Gamma_{\gamma, 1}[\mathfrak{s}] \to 1
\end{align*}
induces a homotopy equivalence $\mathrm{BDiff}_{\partial}({\Sigma_{\gamma, 1}}, \mathfrak{s}) \simeq \mathrm{B}\Gamma_{\gamma, 1}[\mathfrak{s}]$. The analogous statement is true for $\mathrm{SDiff}^0({\Sigma_{\gamma}}, \mathfrak{s})$, for $\gamma \geq 2$, which follows from \cite{Earle-Eells-69}. Thus we have a homotopy equivalence $\mathrm{BSDiff}({\Sigma_{\gamma}}, \mathfrak{s}) \simeq \mathrm{B}\Gamma_{\gamma}[\mathfrak{s}]$ also in this case. 

Note that both the maps $\mathrm{cap}_{\gamma}$ and $\mathrm{stab}_{\gamma}$ can be defined directly on the mapping class group side and that these maps commute with the above homotopy equivalences. We transport the $\kappa_1$ class via the homotopy equivalence and write $\kappa_1$ also for this class, and note that the corresponding signature homomorphism commutes with the previously mentioned maps. Hence, by \eqref{sign-stab}, we have that the diagram

\[\begin{tikzcd}[
  column sep=small,
  row sep=normal,
  cells={nodes={font=\small}}
]
	& {\mathbb{Z}} && \\
	& {H_2(\mathrm{BSDiff}(\Sigma_{\gamma}, \mathfrak{s}); \mathbb{Z})} & {H_2(\mathrm{B}\Gamma_{\gamma}[\mathfrak{s}]; \mathbb{Z})} \\
	& {H_2(\mathrm{BDiff}(\Sigma_{\gamma, 1}, \mathfrak{s}); \mathbb{Z})} & {H_2(\mathrm{B}\Gamma_{\gamma, 1}[\mathfrak{s}]; \mathbb{Z})} \\
	{H_2(\mathrm{BSDiff}(\Sigma_{\gamma-1}, \mathfrak{s}); \mathbb{Z})} & {H_2(\mathrm{BDiff}(\Sigma_{\gamma-1, 1}, \mathfrak{s}); \mathbb{Z})} & {H_2(\mathrm{B}\Gamma_{\gamma-1, 1}[\mathfrak{s}]; \mathbb{Z})} & {H_2(\mathrm{B}\Gamma_{\gamma-1}[\mathfrak{s}]; \mathbb{Z})} \\
	& {} & {}
	\arrow["{\sign_{\gamma}}", from=2-2, to=1-2]
	\arrow["\cong", from=2-2, to=2-3]
	\arrow["{\sign_{\gamma}}"', from=2-3, to=1-2]
	\arrow["{\mathrm{cap}_{\gamma}}", from=3-2, to=2-2]
	\arrow["\cong", from=3-2, to=3-3]
	\arrow["{\mathrm{cap}_{\gamma}}"', from=3-3, to=2-3]
	\arrow["{\sign_{\gamma-1}}", curve={height=-24pt}, from=4-1, to=1-2]
	\arrow["{\mathrm{stab}_{\gamma-1}}", from=4-2, to=3-2]
	\arrow["{\mathrm{cap}_{\gamma-1}}"', from=4-2, to=4-1]
	\arrow["\cong", from=4-2, to=4-3]
	\arrow["{\mathrm{stab}_{\gamma-1}}"', from=4-3, to=3-3]
	\arrow["{\mathrm{cap}_{\gamma-1}}", from=4-3, to=4-4]
	\arrow["{\sign_{\gamma-1}}"', curve={height=70pt}, from=4-4, to=1-2]
	\arrow["{\mathrm{stab}_{\gamma-2}}", dotted, from=5-2, to=4-2]
	\arrow["{\mathrm{stab}_{\gamma-2}}"', dotted, from=5-3, to=4-3]
\end{tikzcd}\]

commutes. The stability of the map $\mathrm{stab}_{\gamma}$ has been studied by many and the latest result in this direction is given by Sierra \cite[Theorem A]{Sierra}, which states that it is surjective for $\gamma \geq 5$ and an isomorphism for $\gamma \geq 7$. Similarly, Randal-Williams \cite[Theorem 2.14]{RandalWilliams-stability} has shown that $\mathrm{cap}_{\gamma}$ is surjective for $\gamma \geq 4$ and an isomorphism for $\gamma \geq 7$.

Randal-Williams \cite[Example 1.10]{RANDALWILLIAMS} has shown for any $\gamma \geq 9$ that
\begin{align}
    \langle \lambda, \mu \mid 4(\lambda + 4\mu)\rangle
\end{align}
is a presentation of $H^2(\mathrm{B}\Gamma_{\gamma}[\mathfrak{s}];\mathbb{Z})$, where $12\lambda = \kappa_1$. This means that $[\mu]$ is a generator of the torsion-free part and that $[\lambda] = -4[\mu]$ in the torsion-free part. Hence, for any $\gamma \geq 9$, we can find an $x \in H_2(\mathrm{B}\Gamma_{\gamma}[\mathfrak{s}];\mathbb{Z})$ such that $\esc{\mu, x}=-1$, and note that
\begin{align*}
    \sign_{\gamma}(x) = \frac{\esc{\kappa_1, x}}{3} = 4\esc{\lambda, x} = -16\esc{\mu, x} = 16.
\end{align*}
Since $\mathrm{stab}_{\gamma}$ and $\mathrm{cap}_{\gamma}$ are surjective for $\gamma \geq 5$, we can use \eqref{sign-stab} to conclude that for any $\gamma \geq 5$, we can find an element $x \in H_2(\mathrm{B}\Gamma_{\gamma, 1}[\mathfrak{s}];\mathbb{Z})$ such that $\sign_{\gamma}(\mathrm{cap}_{\gamma}(x))= 16$.

Hence, we have deduced the following lemma.

\begin{lemma}\label{spin-signature-16}
    Let $\Sigma_{\gamma}$ be a closed, oriented surface of genus $\gamma \geq 5$. For any spin structure $\mathfrak{s}$ on $\Sigma_{\gamma}$, there exists a smooth fiber bundle $p : Z \to \Sigma_{h}$ with fiber $\Sigma_{\gamma}$ such that
    \begin{enumerate}
        \item $\sigma(Z)= 16$, and \label{signature=16}
        \item there is a spin structure on $p : Z \to \Sigma_{h}$ which extends $\mathfrak{s}$.
    \end{enumerate}
\end{lemma}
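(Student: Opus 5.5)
The plan is to assemble the discussion immediately preceding the statement into a proof. Fix $\gamma \geq 5$ and a spin structure $\mathfrak{s}$ on $\Sigma_{\gamma}$. First, realize $\mathfrak{s}$ as the capping of a spin structure on $\Sigma_{\gamma,1}$. To do this, remove a small disk from $\Sigma_{\gamma}$. The restriction of $\mathfrak{s}$ to $\Sigma_{\gamma,1}$ induces the bounding spin structure on the boundary circle, since that circle bounds a disk in $\Sigma_{\gamma}$. Capping therefore recovers $\mathfrak{s}$, and the Arf invariant is unchanged.

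Next, produce a homology class. By Randal-Williams' presentation of $H^2(\mathrm{B}\Gamma_{\gamma'}[\mathfrak{s}];\mathbb{Z})$ for $\gamma' \geq 9$, choose $x' \in H_2(\mathrm{B}\Gamma_{\gamma'}[\mathfrak{s}];\mathbb{Z})$ with $\esc{\mu,x'} = -1$. This class exists because $\mu$ generates the free part and $H_2$ surjects onto the dual of the free part of $H^2$ by universal coefficients. The computation above then gives $\sign_{\gamma'}(x') = 16$. The spin structure here is the one of the same Arf invariant as $\mathfrak{s}$ in genus $\gamma'$.

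Then transport $x'$ down to genus $\gamma$. Using surjectivity of $\mathrm{cap}_{\gamma'}$, lift $x'$ to $H_2(\mathrm{B}\Gamma_{\gamma',1}[\mathfrak{s}])$. Using surjectivity of the stabilization maps $\mathrm{stab}_{\gamma''}$ for $\gamma'' \geq 5$ (Sierra), pull it back step by step to some $x \in H_2(\mathrm{B}\Gamma_{\gamma,1}[\mathfrak{s}])$. By the commutative diagram and \eqref{sign-stab}, $\sign_{\gamma}(\mathrm{cap}_{\gamma}(x)) = 16$.

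Finally, convert $x$ into a bundle. Represent $x$ by a map $f\colon \Sigma_h \to \mathrm{B}\Gamma_{\gamma,1}[\mathfrak{s}] \simeq \mathrm{BDiff}_{\partial}(\Sigma_{\gamma,1},\mathfrak{s})$ from a closed oriented surface; this is possible since every degree-two integral homology class is so representable. The lift \eqref{spin-lift} gives a spin $\Sigma_{\gamma,1}$-bundle over $\Sigma_h$, and capping gives a spin $(\Sigma_{\gamma},\mathfrak{s})$-bundle $Z \to \Sigma_h$. By \eqref{signature-universal-kappa}, $\sigma(Z) = \esc{\kappa_1, f_*[\Sigma_h]}/3 = 16$.

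The main obstacle is the bookkeeping of spin structures across stabilization and capping. One must ensure that the spin structures at each genus have the same Arf invariant, so that the stability results and Randal-Williams' computation apply to the correct component. One must also ensure that $\kappa_1$ is compatible with capping, so that the signature is preserved. The relevant input is that bundles of the same Arf type are related by stabilization, which is what the convention stated above encodes.
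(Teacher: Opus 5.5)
Your proposal is correct and follows the paper's argument essentially step for step. Both proofs take a class $x'$ with $\esc{\mu,x'}=-1$ in genus $\geq 9$ from Randal-Williams' presentation, which gives signature $16$. They then pull it back to $H_2(\mathrm{B}\Gamma_{\gamma,1}[\mathfrak{s}];\mathbb{Z})$ using surjectivity of $\mathrm{cap}$ and of Sierra's stabilization maps together with \eqref{sign-stab}, and finally represent the class by a map from a surface and cap off the resulting spin bundle. Your explicit Arf-invariant bookkeeping and the universal-coefficients justification for the existence of $x'$ spell out details the paper leaves implicit.
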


\begin{remark}
    The reference to a specific spin structure $\mathfrak{s}$ is not directly relevant for discussing diffeomorphism groups in this case; we only chose it for brevity and for our specific application. In fact, if $\mathfrak{s}_0$ and $\mathfrak{s}_1$ are spin structures on $\Sigma_{\gamma, 1}$ with the same Arf invariant, then $\mathrm{Diff}_{\partial}({\Sigma_{\gamma, 1}}, \mathfrak{s}_0)$ and $\mathrm{Diff}_{\partial}({\Sigma_{\gamma, 1}}, \mathfrak{s}_1)$ are conjugate subgroups of $\mathrm{Diff}_{\partial}({{\Sigma_{\gamma, 1}}})$ \cite[Corollary 2]{Johnson}. It is also worth pointing out that many of the standard facts referred to above do not use spin and could have been stated separately. The reader should also be aware that the references used in this section all use group (co)-homology notation and write $H^q(\Gamma_{\gamma};\mathbb{Z})$ to mean $H^q(\mathrm{B}\Gamma_{\gamma};\mathbb{Z})$. We chose to avoid such notation for extra clarity to the reader who is unfamiliar with this subject.
\end{remark}

\section{Proof of Theorem \ref{main theorem} and Theorem \ref{second main theorem}}\label{last_section}

\begin{proof}[Proof of Theorem \ref{main theorem}]
 Let $\Sigma_{\gamma}$ be a closed, oriented surface of genus $\gamma \geq 3$, equipped with any bounding spin structure $\mathfrak{s}$ and let $p: Z \to \Sigma_{h}$ be a smooth fiber bundle with fiber $\Sigma_{\gamma}$ supplied by Lemma \ref{spin-kodiara-lemma} and let $P_Z \to \Sigma_h$ denote the associated $\SDiff{\Sigma_{\gamma}, \mathfrak{s}}$-principal bundle. Fix a CW-structure on $\Sigma_h$ with one $0$-cell $b$ and one $2$-cell, and let $\Sigma_h^1$ be the $1$-skeleton of this CW-structure. Since $\mathcal{R}_{\mathrm{inv}}(\Sigma_{\gamma})$ is connected by Theorem \ref{Ammann-Dahl}, we have as in section \ref{metrics-along-fibers} that there exists a continuous section $\hat{g} : (\Sigma_h, \Sigma_{h}^{1}, b) \to (\mathcal{R}^{\mathrm{fib}}(T^vZ), \mathcal{R}^{\mathrm{fib}}_{\mathrm{inv}}(T^vZ), [f,g_0])$.

 This defines an element $[(L^2(\slashed{\mathfrak{S}}^{\hat{g}}(T^vZ)), {D}^{\hat{g}})] \in F^2(\Sigma_h, \Sigma_h^1)$. Via the canonical inclusion $\iota : (\Sigma_h, \emptyset) \to (\Sigma_h, \Sigma_h^1)$ we also get an element $\iota^*[(L^2(\slashed{\mathfrak{S}}^{\hat{g}}(T^vZ)), {D}^{\hat{g}})] \in F^2(\Sigma_h)$. By the Atiyah-Singer family index theorem (\ref{Atiyah-Singer}), we have that $\mathrm{Ind}(\iota^*[(L^2(\slashed{\mathfrak{S}}^{\hat{g}}(T^vZ)), {D}^{\hat{g}})])= p_!(1) \in {\KO}^{-2}(\Sigma_{h})$. Since $TZ \cong T^vZ \oplus p^*(T\Sigma_{h})$, we have that the spin structure on $T^vZ$, together with a spin structure on $\Sigma_{h}$, induces a spin structure on $Z$. Equip $\Sigma_h$ with any spin structure and let $*^Z, *^{\Sigma_{h}}$ denote the maps from $Z$ and $\Sigma_{h}$ to a point. By functoriality of the Gysin map we have that the diagram
\[\begin{tikzcd}
	{\KO(Z)} & {\KO^{-2}(\Sigma_{h})} \\
	{\KO^{-4}(*)}
	\arrow["{p_!}", from=1-1, to=1-2]
	\arrow["{*^Z_!}"', from=1-1, to=2-1]
	\arrow["{*^{\Sigma_{h}}_!}", from=1-2, to=2-1]
\end{tikzcd}\]
commutes. By the Atiyah-Singer index theorem, we have that under the isomorphism \newline $\mathrm{Bott} :\KO^{-4}(*) \overset{\cong}{\to} \mathbb{Z}$, the class $*^Z_!(1)$ maps to $\frac{\hat{A}(Z)}{2} = -\frac{\sigma(Z)}{16}$, which is non-zero by Lemma \ref{spin-kodiara-lemma}. Hence, we conclude that the element $[(L^2(\slashed{\mathfrak{S}}^{\hat{g}}(T^vZ)), {D}^{\hat{g}})] \in F^2(\Sigma_h, \Sigma_h^1)$ is non-zero.
 
Let $\Phi : (D^2, S^1, *) \to (\Sigma_h, \Sigma_h^1, b)$ be the characteristic map of the $2$-cell in $\Sigma_h$. Since $\Phi$ induces a homeomorphism $\Phi : D^2/S^1 \to \Sigma_h/\Sigma_h^1$, we get a group isomorphism 
\begin{align*}
    \Phi^* : F^2(\Sigma_h, \Sigma_h^1) \overset{\cong}{\to} F^2(D^2, S^1),
\end{align*}
sending the non-zero element $[(L^2(\slashed{\mathfrak{S}}^{\hat{g}}(T^vZ)), {D}^{\hat{g}})] \in F^2(\Sigma_h, \Sigma_h^1)$ to the non-zero element $[(\Phi^*L^2(\slashed{\mathfrak{S}}^{\hat{g}}(T^vZ)), \Phi^*{D}^{\hat{g}})]\in F^2(D^2, S^1)$. Now, let 
\begin{align*}
    F: \Phi^*P_Z \to D^2 \times \SDiff{\Sigma_{\gamma}, \mathfrak{s}}
\end{align*}
be a trivialization with $F((*, f)) = (*,\id)$ (see the definition of the section $\hat{g}$ above). This induces trivializations of the associated bundles
\begin{align*}
    &F : \Phi^*Z \to D^2 \times \Sigma_{\gamma} = \underline{\Sigma}_{\gamma}\\\
    &F :\Phi^*(\mathcal{R}^{\mathrm{fib}}(T^vZ)) \to D^2 \times \mathcal{R}(\Sigma_{\gamma}), 
\end{align*}
which we also denote by $F$. Let $g : (D^2, S^1, *) \to (\mathcal{R}(\Sigma_{\gamma}), \mathcal{R}_{\mathrm{inv}}(\Sigma_{\gamma}), g_0)$ be the family of Riemannian metrics given by $ \mathrm{pr}_2 \circ F\circ \Phi^*\hat{g}$. This family defines an element $[(L^2(\slashed{\mathfrak{S}}^{g}(\underline{\Sigma}_{\gamma})), {D}^{g})] \in F^2(D^2, S^1)$. Since the trivialization $F$ is a trivialization of the corresponding $\SDiff{\Sigma_{\gamma}, \mathfrak{s}}$-principal bundle $\Phi^*P_Z$, we get an isomorphism $(L^2(\slashed{\mathfrak{S}}^{g}(\underline{\Sigma}_{\gamma})), {D}^{g}) \cong (\Phi^*L^2(\slashed{\mathfrak{S}}^{\hat{g}}(T^vZ)), \Phi^*{D}^{\hat{g}})$ of $2$-cycles, which means that they define the same element in $F^2(D^2, S^1)$. Hence, we have that $[(L^2(\slashed{\mathfrak{S}}^{g}(\underline{\Sigma}_{\gamma})), {D}^{g})] \neq 0 \in F^2(D^2, S^1)$, which means that $\inddiff([g]) \neq 0 \in \KO^{-4}(*)$, which proves the first part of the theorem. 

To prove the last part of the theorem, note that the above argument may be summarized as follows. We constructed an element $[g] \in \pi_1(\mathcal{R}_{\mathrm{inv}}(\Sigma_{\gamma}), g_0)$, such that if we follow the diagram
\[\begin{tikzcd}
	{\pi_1(\mathcal{R}_{\mathrm{inv}}(\Sigma_{\gamma}), g_0)} & {\KO^{-4}(*)} \\
	& {\KO^{-2}(D^2,S^1)} \\
	& {F^2(D^2,S^1)} \\
	& {F^2(\Sigma_h,\Sigma_h^1)} \\
	& {F^2(\Sigma_h)} \\
	& {\KO^{-2}(\Sigma_h)} \\
	& {\KO^{-4}(*)}
	\arrow["\inddiff", from=1-1, to=1-2]
	\arrow["\cong", from=2-2, to=1-2]
    \arrow["{\mathrm{Ind}}"', "\cong", from=3-2, to=2-2]
	\arrow["{\Phi^*}"', "\cong", from=4-2, to=3-2]
	\arrow["{\iota^*}", from=4-2, to=5-2]
	\arrow["{\mathrm{Ind}}", from=5-2, to=6-2]
	\arrow["{*^{\Sigma_h}_!}", from=6-2, to=7-2]
\end{tikzcd}\]
to the bottom, we land at the element $*^Z_!(1)$, and since this element was non-zero we concluded that the index difference is non-trivial. Now, if the spin $(\Sigma_{\gamma}, \mathfrak{s})$-bundle $p : Z \to \Sigma_h$ that we started with has $\sigma(Z) = 16$, then the element $*^Z_!(1)$ is a generator of $\KO^{-4}(*)$, and therefore the element $\inddiff([g])$ must also be a generator of $\KO^{-4}(*)$. By Lemma \ref{spin-signature-16}, such a spin $(\Sigma_{\gamma}, \mathfrak{s})$-bundle $p: Z \to \Sigma_h$ exists if $\gamma \geq 5$.
\end{proof}

The proof of Theorem \ref{second main theorem} is similar to the proof of Theorem \ref{main theorem}, but we spell it out for completeness.
\begin{proof}[Proof of Theorem \ref{second main theorem}]
    Let $\Sigma := \Sigma_{\gamma^1} \times \Sigma_{\gamma^2}$ be a product of closed, oriented surfaces of genera $\gamma^1, \gamma^2 \geq~3$, equipped with any spin structure. Let $Z_{1} \to \Sigma_{h^{1}}$, and $Z_{2} \to \Sigma_{h^{2}}$ be bundles supplied by Lemma \ref{spin-kodiara-lemma}, with fiber $\Sigma_{\gamma^i}$, extending the given spin structure on $\Sigma_{\gamma^i}$. Let $p : Z := Z_1\times Z_2 \to \Sigma_{h^1} \times \Sigma_{h^2} =: B$ be the product bundle. Since the $\KO^{-4}(*)$-valued index of $\Sigma$ vanishes (i.e. the $\hat{A}$-genus of $\Sigma$ vanishes), we have by \cite[Theorem 1.3]{Maier} that $\mathcal{R}_{\mathrm{inv}}(\Sigma) \neq \emptyset$. By Theorem \ref{Ammann-Dahl}, we also have that $\pi_0(\mathcal{R}_{\mathrm{inv}}(\Sigma)) = \{*\}$. Suppose for a contradiction that all the groups $\pi_1(\mathcal{R}_{\mathrm{inv}}(\Sigma)), \pi_2(\mathcal{R}_{\mathrm{inv}}(\Sigma)), \pi_3(\mathcal{R}_{\mathrm{inv}}(\Sigma))$ are trivial. Then we have as in Section \ref{metrics-along-fibers} that there exists a section $g : B \to \mathcal{R}^{\mathrm{fib}}_{\mathrm{inv}}(T^vZ)$ and consequently, we have by the Atiyah-Singer family index theorem (\ref{Atiyah-Singer}), that the family index $p_!(1) \in \KO^{-4}(B)$ vanishes. Since $TZ \cong T^vZ \oplus p^*(TB)$, we have that the spin structure on $T^vZ$, together with a spin structure on $B$, induces a spin structure on $Z$. Equip $B$ with any spin structure and let $*^Z, *^{B}$ denote the corresponding maps to a point. By functoriality of the Gysin map we have that the diagram
\[\begin{tikzcd}
	{\KO(Z)} & {\KO^{-4}(B)} \\
	{\KO^{-8}(*)}
	\arrow["{p_!}", from=1-1, to=1-2]
	\arrow["{*^Z_!}"', from=1-1, to=2-1]
	\arrow["{*^{B}_!}", from=1-2, to=2-1]
\end{tikzcd}\]
commutes. By the Atiyah-Singer index theorem, we have that under Bott periodicity \newline $\KO^{-8}(*)~\cong~\mathbb{Z}$, the class $*^Z_!(1)$ maps to $\hat{A}(Z) = \hat{A}(Z_1)\hat{A}(Z_2) = \frac{\sigma(Z_1)\sigma(Z_2)}{64}$ which is non-zero by Lemma \ref{spin-kodiara-lemma}. Hence the family index $p_!(1) \in \KO^{-4}(B)$ is non-zero and we have established the contradiction, which finishes the proof.
\end{proof}

\begin{proof}[How to deduce the conclusion of Remark \ref{Ammann-Dahl conjecture}]
    Let $Z_{1} \to \Sigma_{h^{1}}$, and $Z_{2} \to \Sigma_{h^{2}}$ be bundles supplied by Lemma \ref{spin-kodiara-lemma}, with fiber $\Sigma_{\gamma^i}$, extending the given spin structure on $\Sigma_{\gamma^i}$. Let $p : Z := Z_1\times Z_2 \to \Sigma_{h^1} \times \Sigma_{h^2} =: B$ be the product bundle. Equip $B$ with a CW-structure with one $0$-cell $b$ and one $4$-cell, and let $B^{3}$ denote the $3$-skeleton of $B$.
     
     Now if $\mathcal{R}_{\mathrm{inv}}(\Sigma_{\gamma^1} \times \Sigma_{\gamma^2})$ is $2$-connected, then as in the proof of Theorem \ref{main theorem} there exists a section 
     \begin{align*}
         g : (B, B^{3}, b) \to (\mathcal{R}^{\mathrm{fib}}(T^vZ), \mathcal{R}^{\mathrm{fib}}_{\mathrm{inv}}(T^vZ), [f,g_0]),
     \end{align*}
     and the rest of the proof proceeds in analogy with the proof of Theorem \ref{main theorem}.
\end{proof}

\newpage

\bibliographystyle{abbrv}
\bibliography{references}


\end{document}